\documentclass[11pt]{article}
\usepackage{arxiv}
\usepackage[utf8]{inputenc}
\usepackage{graphicx}
\usepackage{epstopdf}% To incorporate .eps illustrations using PDFLaTeX, etc.
\usepackage[caption=false]{subfig}% Support for small, `sub' figures and tables

\usepackage[sort&compress]{natbib}
 \bibpunct[, ]{[}{]}{,}{n}{}{,}%
\usepackage{amsmath}
\usepackage{amssymb}
\usepackage{amsthm}
\usepackage{algorithm}
\usepackage{algpseudocode}
\usepackage[hyphens]{url}
\usepackage{hyperref}
\usepackage{booktabs}

\newtheorem{theorem}{Theorem}[section]
\newtheorem{lemma}[theorem]{Lemma}

\newtheorem{proposition}[theorem]{Proposition}
\newtheorem{conjecture}[theorem]{Conjecture}
\newtheorem{definition}[theorem]{Definition}
\newtheorem{example}[theorem]{Example}
\newtheorem{assumption}{Assumption}
\newtheorem{remark}{Remark}

\newcommand{\R}{\mathbb{R}}
\newcommand{\E}{\mathbb{E}}

\newcommand{\Acal}{\mathcal{A}}
\newcommand{\Scal}{\mathcal{S}}
\newcommand{\Ncal}{\mathcal{N}}
\newcommand{\Ccal}{\mathcal{C}}
\DeclareMathOperator{\cov}{cov}
\DeclareMathOperator{\var}{var}
\DeclareMathOperator{\Tr}{Tr}

\begin{document}

\title{Markov Information Processes}
\author{Furkan Sezer\thanks{Texas A\&M University (furkan.sezer@tamu.edu, furkanszr@yahoo.com).}}
\date{\today}

\maketitle

\begin{abstract}
We study information design when a designer with commitment shapes the information of strategically interacting, far-sighted agents whose actions drive a persistent, controlled Markov state. We introduce the Markov Bayes correlated equilibrium (Markov BCE), the controlled-Markov generalisation of the BCE of Bergemann and Morris (2016), characterised by a dynamic obedience condition that adds a continuation-value term to the static one and reduces to it when actions cannot move the state. Recommending actions is without loss; the designer's problem is recursive in the agents' promised continuation utilities and is solved by a set-valued backward-induction algorithm whose optimum exists and lies between the no-disclosure and first-best values. For linear-quadratic-Gaussian payoffs the obedience condition becomes a covariance condition with a modified interaction matrix, and the stationary case reduces to an algebraic Riccati equation. When agents instead learn the transition, we identify the rent an agent earns from a model of the dynamics sharper than the designer anticipates: it is non-negative, zero at the known-dynamics benchmark, and deterred only by building slack into obedience. Under persistent excitation the cumulative rent grows logarithmically as heterogeneous agents' estimates converge. Two worked examples, in congestion and resource coordination, together with a numerical study illustrate the theory.
\end{abstract}

\noindent {\small \textbf{Keywords:} information design; controlled Markov processes; dynamic games; learning in games; system identification}

\vspace{0.3cm}
\noindent {\small \textbf{Mathematics Subject Classification (2020):} 91A15; 91A26; 90C40; 93E20}

\section{Introduction}
Information design asks how a designer who commits to an information structure can shape the equilibrium behaviour of strategically interacting agents \cite{kamenica2011bayesian, bergemann2019information}. With many agents the relevant solution concept is the Bayes correlated equilibrium (BCE) of \cite{bergemann2016bayes}: by the revelation principle the designer recommends an action profile and the only constraint is \emph{obedience}, that each agent prefers to follow its own recommendation given the posterior the recommendation induces. In the linear-quadratic-Gaussian (LQG) class, obedience reduces to a condition on the second moments of actions and payoff states \cite{Bergemann2013}.

This paper develops a dynamic counterpart in which a persistent state evolves as a controlled Markov process and the agents are \emph{far-sighted}. We call the resulting solution concept the \emph{Markov Bayes correlated equilibrium} (Markov BCE): a generalisation of BCE characterised by a \emph{dynamic obedience condition} in place of the static one. Crucially, a \emph{dynamic revelation principle} holds here, meaning that recommending actions remains without loss of generality in this dynamic, controlled-state environment. The distinction that drives our analysis is between agents who optimize only the current stage and agents who understand that their action today shifts the distribution of tomorrow's state and hence their own future payoffs. The Markov-persuasion literature \cite{wu2022sequential, bacchiocchi2024markov, iyer2023markov} takes the former route: a single stream of myopic receivers, no game among them, and per-stage persuasion. We take the latter. Far-sighted agents make obedience genuinely intertemporal: a deviation must be deterred not only at the stage payoff but also through its continuation consequences. This couples the obedience constraints across time and places the design problem in the recursive, promise-keeping tradition of dynamic incentive design \cite{abreu1990toward, sannikov2008continuous}, while keeping the multi-agent obedience structure of \cite{bergemann2016bayes}. The closest economics antecedent, \cite{makris2023information}, studies information design in multi-stage games with far-sighted players; relative to that work we organise the dynamics around a controlled Markov state and obtain a closed-form covariance characterisation in the LQG class.

The paper has two parts. Part~I (Sections~\ref{sec:static}--\ref{sec:lqg}) is the known-dynamics benchmark: the transition is common knowledge, the designer commits against it, and obedience binds exactly. Part~II (Sections~\ref{sec:learning}--\ref{sec:rentlqg}) perturbs this benchmark by letting agents observe the persistent state and learn the dynamics. Because the transition is controlled, an agent that has learned it better than the designer's committed policy accounts for can deviate to steer the state in its favor; the benchmark of Part~I is precisely the reference point against which this advantage is measured.

The two parts use different informational environments (in Part~I the persistent state is hidden from agents, in Part~II it is observed), and the difference is deliberate. Under known dynamics observability is strategically inert, so hiding $s$ keeps the Part~I benchmark general; it acquires content only once the transition must be learned, because one cannot steer a state one cannot see. Section~\ref{sec:learning} makes this precise and shows the split nests: conditioning the Part~I obedience condition on an observed state recovers the Part~II benchmark as a special case.

\section{Related Work and Positioning}\label{sec:related}
This paper sits at the confluence of information design and Bayes correlated equilibrium, dynamic and Markov persuasion, recursive incentives and promised-utility methods, the identification of linear dynamical systems, and continuous-time stochastic Stackelberg information design. We review each strand and then state how the present framework differs.

\subsection{Information design and Bayes correlated equilibrium.}
The designer-commitment paradigm originates with Bayesian persuasion \cite{kamenica2011bayesian} and its many-player, many-state generalisation, the Bayes correlated equilibrium of \cite{bergemann2016bayes, bergemann2019information}, which lifts correlated equilibrium \cite{aumann1987correlated} to incomplete information and reduces the design problem to an obedience constraint. Information design in games and its public/private signal structure are developed by \cite{mathevet2020information}, and the linear-quadratic-Gaussian specialisation, in which obedience becomes a condition on the second moments of actions and states, is \cite{Bergemann2013}. Our static case (Section~\ref{sec:static}) is exactly this obedience condition; the contribution is the \emph{Markov Bayes correlated equilibrium}, its dynamic, controlled-Markov extension to far-sighted agents, characterised by the dynamic obedience condition.

\subsection{Dynamic and Markov persuasion.}
A growing literature makes persuasion dynamic. \cite{ely2017beeps} discloses a Markov state to a single receiver in continuous time; \cite{renault2017dynamic} characterise optimal dynamic information provision in a Markov environment; \cite{ely2020moving} study dynamic goal-setting; and \cite{doval2020sequential} give a general sequential information-design framework. On the algorithmic side, Markov persuasion processes \cite{wu2022sequential, bacchiocchi2024markov, iyer2023markov} treat a designer facing a stream of receivers in a Markov environment. These analyses share a boundary we cross: the receivers are short-lived (the Markov-persuasion stream), or single with an action that cannot move the state \cite{ely2017beeps, renault2017dynamic}, or single and far-sighted but facing no other strategic player \cite{ely2020moving}, or one of many players who each act once in a designed order \cite{doval2020sequential}. We instead fix a set of far-sighted, mutually strategic agents whose actions move the state, which makes obedience intertemporal and couples the constraints across agents and time. The closest antecedent is \cite{makris2023information}, who generalise Bayes correlated equilibrium to multi-stage games: their \emph{sequential} Bayes correlated equilibrium is characterised by obedience at every history, and they frame the characterisation as a revelation principle for dynamic information design. Our Markov BCE is the specialisation of that concept to a \emph{controlled-Markov} state, where history collapses to the persistent state. The specialisation is what buys the structure absent from the general multi-stage formulation: a recursive, promise-keeping designer problem, a closed-form LQG covariance characterisation, and a stationary infinite-horizon theory. Part~II then departs from the common-knowledge-game premise entirely, asking what an agent gains from learning the state's transition kernel, a question that does not arise when the game is fixed and known.

\subsection{Recursive incentives and promised-utility methods.}
Carrying the agents' promised continuation utilities as state variables is the promise-keeping tradition of repeated and dynamic games \cite{abreu1990toward, spear1987repeated} and its continuous-time counterpart \cite{sannikov2008continuous}. Dynamic mechanism design with evolving private information \cite{pavan2014dynamic} shares the forward-looking incentive structure but allocates through transfers; we have no transfers and design only information, so our binding object is the dynamic obedience condition rather than an envelope or payoff-equivalence formula. A parallel principal--agent line prices the value of information directly: in a continuous-time moral-hazard contract, \cite{hajjej2025value} define the value of the information the principal lacks as the gap between her first-best and second-best value functions, the same first-best-minus-constrained structure our value bounds (Proposition~\ref{prop:valuebounds}) take, though there the instrument is a transfer and here it is disclosure. Section~\ref{sec:designer} casts the designer's problem in this recursive form, with the obedience constraints as the implementability conditions.

\subsection{Learning in games and identification of linear systems.}
Part~II's agents estimate a controlled linear-Gaussian transition, which places the learning analysis in the identification of linear dynamical systems: self-normalised least squares \cite{abbasi2011improved}, finite-sample identification without mixing \cite{simchowitz2018learning}, and the sample complexity of linear-quadratic control \cite{dean2020sample}; the strategic side, agents learning while the designer adapts, connects to learning in games \cite{fudenberg1998theory}. Our use of these tools is specific: the estimation rate feeds, through a Riccati-Lipschitz step, the rent of Proposition~\ref{prop:rent}, yielding the logarithmic-rent Theorem~\ref{thm:regret}. To our knowledge the resulting object, an agent's rent from a superior model of a \emph{designed} environment's dynamics, is new.

\subsection{Continuous-time stochastic Stackelberg information design.}
Finally, the settings of our worked examples come from two continuous-time stochastic Stackelberg control problems in which a leader steers strategic followers through the information structure: \cite{sezer2026power} pairs a committed Gaussian public signal with a Groves transfer for multi-area power-system coordination, while \cite{sezer2026hurricane} steers evacuation zones playing a congestion game via advisory precision and a tiered release schedule, without transfers. In both, the latent state is a filtered jump-diffusion and the leader's distributionally robust problem is an Isaacs equation solved in the viscosity sense. Each is solved on its own terms: stochastic optimal control, not the discrete-time framework developed here. Section~\ref{sec:examples} borrows only their strategic structure, and Part~II's learning problem is a discrete-time, multi-agent analogue of a dynamic-information question raised in the continuous-time persuasion literature \cite{ctpersuasion2024}.

\section{Contributions}
\begin{itemize}
\item We define a \emph{Markov information process} (MKIP): a horizon-$H$, controlled-Markov information-design problem with designer commitment and far-sighted agents (Section~\ref{sec:model}).
\item We introduce the \emph{Markov Bayes correlated equilibrium} (Markov BCE), the controlled-Markov, far-sighted generalisation of BCE, characterised by a \emph{dynamic obedience condition} (Definition~\ref{def:dynobd}) derived from a one-shot-deviation argument; it augments the static Bergemann--Morris obedience condition with a continuation-value differential and reduces to it in the static limit (Proposition~\ref{prop:reduction}). A dynamic revelation principle (Proposition~\ref{prop:revelation}) shows that action recommendations are without loss of generality, justifying the formulation.
\item We formulate the designer's problem recursively, with the agents' promised continuation utilities as endogenous state variables, and give an Abreu--Pearce--Stacchetti set-valued backward-induction algorithm for the designer-optimal Markov BCE (Algorithm~\ref{alg:aps}). We characterise the Markov BCE as a policy/continuation-value fixed point and prove existence (Proposition~\ref{prop:fixedpoint}); under standard regularity the designer's optimum is attained (Proposition~\ref{prop:optexist}) and lies between the no-disclosure and first-best values (Proposition~\ref{prop:valuebounds}).
\item For LQG payoffs (Section~\ref{sec:lqg}) we show dynamic obedience is a covariance condition with a modified interaction matrix $\widetilde\Phi_h$ and a controlled-transition term, with quadratic continuation values (Theorem~\ref{thm:lqg}); the static covariance condition is the action-independent / undiscounted limit, the set-valued recursion collapses to a parametric Riccati recursion, and the infinite-horizon stationary case is a discounted algebraic Riccati fixed point with a contraction-based existence guarantee (Proposition~\ref{prop:stationary}).
\item In Part~II we let agents observe the state and learn the transition. For \emph{general} payoffs and kernels we define the rent an agent earns from a superior model of the dynamics and show it is non-negative, bounded by the agent's misspecification, monotone in the class of models to be deterred, and zero exactly at the known-dynamics benchmark of Part~I (Propositions~\ref{prop:rent_general}, \ref{prop:rentbound}). In the LQG class the rent has a closed form, quadratic in the agent's model error (Proposition~\ref{prop:rent}); deterring it forces the designer to satisfy obedience with a robustness margin (Proposition~\ref{prop:robust}); and under stationary primitives and persistent excitation the cumulative rent is $\tilde O(\log T)$ as heterogeneous agents' estimates of the dynamics converge (Theorem~\ref{thm:regret}).
\item We illustrate the LQG theory with two worked examples (Section~\ref{sec:examples}), drawn from congestion (evacuation) \cite{sezer2026hurricane} and resource-coordination (power) \cite{sezer2026power} settings that are recast as Markov information processes and are related by a single sign of the cross-coupling, together with a numerical illustration comparing utilitarian and worst-case (maximin) designs (Section~\ref{sec:numerics}).
\end{itemize}

\begin{center}\rule{0.6\linewidth}{0.4pt}\\[4pt]{\large\bfseries Part I: Markov Information Processes with Known Dynamics}\\[2pt]\rule{0.6\linewidth}{0.4pt}\end{center}

\section{Static Case: Bayes Correlated Equilibrium}\label{sec:static}
Fix a finite set of players $\Ncal=\{1,\dots,n\}$, action sets $\Acal_i$ with profiles $a=(a_i)_{i\in\Ncal}\in\Acal=\prod_i\Acal_i$, a payoff state $\gamma\in\Gamma$ with common prior $\mu\in\Delta(\Gamma)$, and payoffs $u_i(\gamma,a)$. A \emph{decision rule} is a map $\sigma:\Gamma\to\Delta(\Acal)$; by the revelation principle we read $\sigma(a\mid\gamma)$ as the probability of recommending profile $a$ in state $\gamma$, with each agent privately told its own component $a_i$ (the dynamic counterpart of this reduction is Proposition~\ref{prop:revelation}).

\begin{definition}[Obedience / BCE, \cite{bergemann2016bayes}]\label{def:bce}
The decision rule $\sigma$ is a \emph{Bayes correlated equilibrium} if for every $i\in\Ncal$, every recommended $a_i$ occurring with positive probability, and every alternative $a_i'\in\Acal_i$,
\begin{equation}\label{eq:static_obd}
\sum_{\gamma\in\Gamma}\sum_{a_{-i}\in\Acal_{-i}}\mu(\gamma)\,\sigma(a_i,a_{-i}\mid\gamma)\,\big[u_i(\gamma,a_i,a_{-i})-u_i(\gamma,a_i',a_{-i})\big]\ \ge\ 0 .
\end{equation}
\end{definition}

Condition \eqref{eq:static_obd} is the obedience condition of \cite{bergemann2016bayes}: holding fixed the joint law of the state and the recommendations to others, following the recommendation is optimal. It is the ground on which the rest of the paper is built; the dynamic obedience condition given at Definition \ref{def:dynobd} generalises \eqref{eq:static_obd}.

\section{Markov Information Processes}\label{sec:model}
A \emph{Markov information process} (MKIP) augments the static game with a persistent state and a controlled transition, and lets agents be far-sighted over a horizon $H$ with discount factor $\delta\in(0,1]$.

\textbf{Primitives.} At each stage $h\in[H]:=\{1,\dots,H\}$ there are: a persistent Markov state $s\in\Scal$, observed by the designer; a transient payoff state $\gamma\in\Gamma$ with prior $\mu_h\in\Delta(\Gamma)$; stage payoffs $u_{h,i}:\Scal\times\Gamma\times\Acal\to\R$; a designer objective $v_h:\Scal\times\Gamma\times\Acal\to\R$; and a transition kernel $P_h:\Scal\times\Gamma\times\Acal\to\Delta(\Scal)$ depending on the \emph{realised} actions.

\textbf{Timing within stage $h$.}
\begin{enumerate}
\item The persistent state $s_h$ is given and $\gamma_h\sim\mu_h$ is drawn.
\item The designer, having committed at the outset to a policy $\sigma=(\sigma_h)_{h\in[H]}$ with $\sigma_h:\Scal\times\Gamma\to\Delta(\Acal)$, draws a profile $a_h\sim\sigma_h(\cdot\mid s_h,\gamma_h)$ and privately recommends $a_{h,i}$ to each agent $i$.
\item Each agent chooses an action; payoffs $u_{h,i}(s_h,\gamma_h,\cdot)$ and the designer objective $v_h(s_h,\gamma_h,\cdot)$ accrue at the realised profile.
\item The next state is drawn from the realised profile, $s_{h+1}\sim P_h(\cdot\mid s_h,\gamma_h,a_h)$.
\end{enumerate}

In Part~I, agents observe their own recommendation (and the public calendar $h$); they need not observe $s_h$ or $\gamma_h$, about which the recommendation is informative. The designer commits to $\sigma$ and cannot react to an unobserved within-stage deviation; deviations propagate only through the realised next state $s_{h+1}$, which the designer observes and on which its continuation policy depends.

\textbf{Far-sighted agents.} Each agent $i$ ranks outcomes by the expected discounted sum
\begin{equation}\label{eq:pref}
\E\Big[\textstyle\sum_{h=1}^{H}\delta^{h-1}u_{h,i}(s_h,\gamma_h,a_h)\Big].
\end{equation}
In particular an agent that contemplates disobeying at stage $h$ accounts for the effect of its action on $s_{h+1}$ and hence on its own continuation payoff. This is the sole behavioural departure from the myopic Markov-persuasion model, and it is what makes obedience dynamic.

\section{Dynamic Obedience}\label{sec:dynamic}
Fix a policy $\sigma$. Under full obedience the process is Markov in $s$, so define agent $i$'s \emph{continuation value} from stage $h$ at persistent state $s$,
\begin{equation}\label{eq:contval}
W_{h,i}(s)\ :=\ \E\Big[\textstyle\sum_{\tau=h}^{H}\delta^{\tau-h}\,u_{\tau,i}(s_\tau,\gamma_\tau,a_\tau)\ \Big|\ s_h=s\Big],\qquad W_{H+1,i}\equiv 0,
\end{equation}
where the expectation is taken under $\gamma_\tau\sim\mu_\tau$, $a_\tau\sim\sigma_\tau(\cdot\mid s_\tau,\gamma_\tau)$, $s_{\tau+1}\sim P_\tau(\cdot\mid s_\tau,\gamma_\tau,a_\tau)$ with every agent obeying.

Because payoffs are bounded and the horizon is finite, a recommendation policy is incentive compatible for far-sighted agents if and only if no single-stage deviation is profitable (the one-shot-deviation principle). At stage $h$, persistent state $s$, an agent recommended $a_i$ that contemplates playing $a_i'$ instead obtains, in conditional expectation over $(\gamma,a_{-i})$ given $a_i$,
\begin{equation}\label{eq:Q}
Q_{h,i}(s,a_i,a_i')\ :=\ \E\Big[\,u_{h,i}(s,\gamma,a_i',a_{-i})\ +\ \delta\,\E_{s'\sim P_h(\cdot\mid s,\gamma,a_i',a_{-i})}\big[W_{h+1,i}(s')\big]\ \Big|\ a_i\Big],
\end{equation}
where the inner expectation reflects that the deviation changes the law of $s_{h+1}$, after which the agent reverts to obedience.

\begin{definition}[Dynamic obedience]\label{def:dynobd}
The policy $\sigma$ satisfies \emph{dynamic obedience}, and is a \emph{Markov Bayes correlated equilibrium} (Markov BCE) of the MKIP, if for every $i\in\Ncal$, stage $h$, state $s$, recommended $a_i$ (positive probability), and alternative $a_i'$,
\begin{equation}\label{eq:dynobd}
\begin{aligned}
\sum_{\gamma,\,a_{-i}}\mu_h(\gamma)\,\sigma_h(a_i,a_{-i}\mid s,\gamma)\Big[&\underbrace{u_{h,i}(s,\gamma,a_i,a_{-i})-u_{h,i}(s,\gamma,a_i',a_{-i})}_{\text{stage incentive}}\\
&{}+\ \delta\,\underbrace{\Delta^{i}_{h}(s,\gamma,a_i,a_{-i};a_i')}_{\text{continuation incentive}}\Big]\ \ge\ 0,
\end{aligned}
\end{equation}
where the continuation-value differential is
\begin{equation}\label{eq:Delta}
\begin{aligned}
\Delta^{i}_{h}(s,\gamma,a_i,a_{-i};a_i')\ :=\ \E_{s'\sim P_h(\cdot\mid s,\gamma,a_i,a_{-i})}\!\big[W_{h+1,i}(s')\big]
-\ \E_{s'\sim P_h(\cdot\mid s,\gamma,a_i',a_{-i})}\!\big[W_{h+1,i}(s')\big].
\end{aligned}
\end{equation}
\end{definition}

Equation \eqref{eq:dynobd} is the obedience condition \eqref{eq:static_obd} with one new object: the term $\delta\,\Delta^i_h$, which prices the future. Obeying and disobeying lead to different distributions over $s_{h+1}$, hence to different continuation values $W_{h+1,i}$; a far-sighted agent internalises this, and the designer must compensate for it. The constraint at stage $h$ therefore depends, through $W_{h+1,i}$, on the entire continuation policy: the obedience constraints are coupled forward in time.

\begin{proposition}[Reduction to the static case]\label{prop:reduction}
Suppose (i) $H=1$; or (ii) the transition is action-independent, $P_h(\cdot\mid s,\gamma,a)=P_h(\cdot\mid s,\gamma)$ for all $a$; or (iii) $\delta=0$. Then $\Delta^i_h\equiv 0$ and \eqref{eq:dynobd} coincides, stage by stage, with the static Bergemann--Morris obedience condition \eqref{eq:static_obd}.
\end{proposition}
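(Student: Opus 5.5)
The plan is to establish, in each of the three cases, that the continuation term $\delta\,\Delta^i_h$ in \eqref{eq:dynobd} contributes nothing to the sum. We then match what is left to \eqref{eq:static_obd}, one stage at a time. The identification is as follows. At stage $h$ and state $s$, take the payoff state to be $\gamma$ with prior $\mu_h$ and payoffs $u_i(\gamma,a):=u_{h,i}(s,\gamma,a)$. Take the decision rule to be $\sigma(\cdot\mid\gamma):=\sigma_h(\cdot\mid s,\gamma)$. Once the continuation term is removed, the left side of \eqref{eq:dynobd} is literally the left side of \eqref{eq:static_obd} for this static game. The quantifiers agree as well: a recommendation $a_i$ has positive probability in one condition exactly when it does in the other. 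So all the work is in showing the continuation term vanishes.

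\emph{Case (i), $H=1$.} The only stage is $h=1$, and $\Delta^i_1$ is built from $W_{2,i}=W_{H+1,i}\equiv 0$ by \eqref{eq:contval}. Both expectations in \eqref{eq:Delta} are expectations of the zero function, so $\Delta^i_1\equiv 0$.

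\emph{Case (ii), action-independent transition.} Here $P_h(\cdot\mid s,\gamma,a_i,a_{-i})$ and $P_h(\cdot\mid s,\gamma,a_i',a_{-i})$ are the same measure $P_h(\cdot\mid s,\gamma)$. The two expectations in \eqref{eq:Delta} therefore integrate the same function $W_{h+1,i}$ against the same measure, and they cancel. One point needs checking: that $W_{h+1,i}$ is a well-defined finite function, so that the difference is not of the form $\infty-\infty$. This follows because the horizon is finite and payoffs are bounded, as the paper already assumes for the one-shot-deviation principle. Note also that $W_{h+1,i}$ is computed under $\sigma$ and may depend on the policy, but this does not matter: the cancellation is pointwise in $(s,\gamma,a_{-i})$ whatever $W_{h+1,i}$ is.

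\emph{Case (iii), $\delta=0$.} Here $\Delta^i_h$ need not vanish, so the statement ``$\Delta^i_h\equiv 0$'' should be read as ``$\delta\,\Delta^i_h\equiv 0$'' in this case. That product is zero because $\Delta^i_h$ is finite, by the same boundedness argument as in case (ii). I expect this reading to be the only delicate point. I would state it explicitly, so the proposition is correct as written: what \eqref{eq:dynobd} actually uses is the continuation incentive $\delta\,\Delta^i_h$, and it is this that is identically zero in all three cases. With that settled, each case reduces to the identification in the first paragraph, and the proof is complete.
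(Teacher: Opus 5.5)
Your proposal is correct and follows the paper's proof: the same three-case check that the continuation term drops out, then reading the remaining sum as the static obedience condition \eqref{eq:static_obd} with $s$ fixed. Your remark on case (iii) sharpens the paper slightly: when $\delta=0$, $W_{h+1,i}$ still carries the stage-$(h+1)$ payoff, so $\Delta^i_h$ itself need not vanish and only the product $\delta\,\Delta^i_h$ is identically zero, whereas the paper's proof loosely asserts $\Delta^i_h\equiv 0$ in all three cases.
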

\begin{proof}
Under (i) $W_{H+1,i}\equiv0$; under (ii) the two laws of $s'$ in \eqref{eq:Delta} are identical; under (iii) the continuation term is multiplied by zero. In each case $\Delta^i_h\equiv0$, and \eqref{eq:dynobd} becomes $\sum_{\gamma,a_{-i}}\mu_h\,\sigma_h[u_{h,i}(s,\gamma,a_i,a_{-i})-u_{h,i}(s,\gamma,a_i',a_{-i})]\ge0$, which is \eqref{eq:static_obd}.
\end{proof}

\section{The Designer's Problem}\label{sec:designer}
The designer's problem is
\begin{equation}\label{eq:designer}
\max_{\sigma}\ \ \E\Big[\sum_{h=1}^{H}\delta^{h-1}v_h(s_h,\gamma_h,a_h)\Big]\quad\text{subject to dynamic obedience \eqref{eq:dynobd}},
\end{equation}
imposed at every stage, state, and recommendation. Restricting the designer's instrument to an action recommendation $\sigma_h:\Scal\times\Gamma\to\Delta(\Acal)$ rather than an abstract private signal is without loss: it is the dynamic counterpart of the revelation principle for Bayes correlated equilibrium \cite{makris2023information}, specialised to the controlled-Markov setting.

\begin{proposition}[Dynamic revelation principle]\label{prop:revelation}
Consider any dynamic information structure in which, at each stage $h$, the designer commits to a private-signalling kernel $\pi_h:\Scal\times\Gamma\to\Delta(M)$ sending agent $i$ a message $m_{h,i}$, and the agents play a perfect Bayesian equilibrium of the induced game under the discounted preferences \eqref{eq:pref}, with deviations unmonitored and propagating only through the realised next state. Then there is a direct recommendation policy $\sigma=(\sigma_h)$, $\sigma_h:\Scal\times\Gamma\to\Delta(\Acal)$, that induces the same joint law of $(s_h,\gamma_h,a_h)$ at every stage (hence the same designer value and agent payoffs), and under which obeying the recommendation is an equilibrium, i.e.\ $\sigma$ satisfies dynamic obedience \eqref{eq:dynobd}. Conversely every dynamically obedient $\sigma$ is induced by an information structure, namely itself. The designer's problem is therefore without loss the optimization \eqref{eq:designer} over dynamically obedient recommendation policies.
\end{proposition}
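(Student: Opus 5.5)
The plan is the standard ``collapse the messages'' construction of the static BCE revelation principle, applied stage by stage and then carried through the continuation values. Fix an information structure $(\pi_h)$ and a perfect Bayesian equilibrium in which agent $i$ at stage $h$ plays a (possibly mixed) strategy $\beta_{h,i}$. The strategy depends on its message $m_{h,i}$ and on whatever else the agent has seen: past messages and its own past actions. Define the direct policy by composition:
\[
\sigma_h(a\mid s,\gamma)\ :=\ \Pr\big(a_h=a\mid s_h=s,\gamma_h=\gamma\big),
\]
where the probability is computed under the equilibrium. To make this a function of $(s,\gamma)$ alone, I will argue that on path the law of the agents' private histories given $(s_h,\gamma_h)$ is determined by the equilibrium play. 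The designer can simulate those histories, or equivalently $\sigma_h$ may be taken to randomise over them. If one insists on $\sigma_h$ depending only on current $(s,\gamma)$, the key observation is that the primitives depend on the past only through $s_h$. The state-stage joint law of $(s_h,\gamma_h,a_h)$ is therefore reproduced by the conditional law above. Outcome-equivalence then follows by induction on $h$: if $(s_h,\gamma_h)$ have the same law under both regimes, then so does $a_h$ by construction, and so does $s_{h+1}$ through $P_h$. Designer value and agent payoffs are expectations of $v_h,u_{h,i}$ under these laws, so they coincide.

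Next comes obedience. By the one-shot-deviation principle (finite horizon, bounded payoffs), it suffices to verify \eqref{eq:dynobd}. Following the recommendation yields continuation values $W_{h+1,i}$ equal to the original equilibrium continuation values, averaged appropriately. This holds because, after reverting, all future play and the designer's policy depend on the realised $s_{h+1}$ exactly as before, since deviations are unmonitored and propagate only through $s_{h+1}$. Suppose some recommendation $a_i$ admitted a profitable one-stage deviation $a_i'$ in \eqref{eq:dynobd}. The left side of \eqref{eq:dynobd} is a positive-weight average over all messages $m_{h,i}$ at which the original strategy played $a_i$, and this average is negative. Hence at some such message (of positive probability), playing $a_i'$ instead and then resuming equilibrium would be a profitable deviation in the original game. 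This is the ``garbling preserves obedience'' step: a recommendation is coarser than the message, and a deviation profitable against the coarser information is implementable against the finer information by the map $m_{h,i}\mapsto a_i'$ whenever $\beta$ recommends $a_i$. That contradicts sequential rationality of the PBE. The converse direction is immediate: a dynamically obedient $\sigma$ is itself a signalling kernel with $M=\Acal$, and obedience is an equilibrium by the one-shot-deviation principle.

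The main obstacle is the continuation term. In the original structure, agent $i$'s continuation payoff after a deviation may depend on its private history, for example on past messages correlated with future ones. In the direct policy, $W_{h+1,i}$ depends only on $s_{h+1}$. To handle this I will use the modelling restriction in the statement: $\pi_h$ depends only on $(s_h,\gamma_h)$, with fresh randomisation each stage. Under this restriction, future messages are conditionally independent of past private information given $s_{h+1}$. An agent's optimal continuation then depends on the past only through the belief about $s_{h+1}$. Once $\sigma$ is built, the agent's expected continuation value after any action is therefore $\E[W_{h+1,i}(s')]$ under the induced law of $s'$. I expect this conditional-independence bookkeeping to be where care is needed. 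If it fails, I would fall back to letting $\sigma_h$ be history-dependent, as in \cite{makris2023information}.
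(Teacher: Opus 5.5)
Your construction of $\sigma_h$, the induction through $P_h$ for per-stage outcome equivalence, and the pooling step follow the paper's proof, and the first two are fine. The gap is the continuation term. You rightly single it out, but your fix does not close it. The paper's proof handles this term by asserting that, since deviations are unmonitored and act only through $s_{h+1}$, the deviator's continuation is the committed continuation at $s_{h+1}$ under both $\sigma$ and $\pi$; that assertion needs the same missing hypothesis.

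Your conditional independence ($m_{h+1}$ independent of the past given $s_{h+1}$) concerns \emph{messages}. In a PBE, however, the \emph{actions} at $\tau>h$ may depend on whole message histories, and this holds for the other agents' continuation strategies, not only agent $i$'s best reply. So under $\pi$ the deviator's continuation depends on $s_{h+1}$ \emph{and} on the history, in particular on the recommendation being disobeyed; it is not $W_{h+1,i}(s_{h+1})$. Per-stage equivalence matches the two only after averaging over histories, which says nothing about $\Delta^i_h$ at a given recommendation.

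The gap is real. Take two agents and $H=2$:
\begin{itemize}
\item at stage 1 a public fair coin $c\in\{x,y\}$ tells agent 1 what to play, with $u_{1,1}(y)-u_{1,1}(x)=\varepsilon\in(0,1)$;
\item $s_2=a_{1,1}$, revealed publicly at stage 2;
\item stage 2 is a coordination game with strict equilibria worth $1$ and $0$ to agent 1, and the good one is played iff $s_2=c$.
\end{itemize}
This is a PBE (obeying $c=x$ gives $1>\varepsilon$), and $m_2$ is independent of $m_1$ given $s_2$. Yet both values of $s_2$ are on path and are followed by the good equilibrium. So any $\sigma$ with the same per-stage laws has $W_{2,1}(x)=W_{2,1}(y)$, hence $\Delta^1_1\equiv0$, and \eqref{eq:dynobd} at recommendation $x$ reads $-\varepsilon\ge0$.

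Your contradiction argument works under the extra hypothesis that the $\pi$-equilibrium strategies depend only on the current message (plus private randomisation). Then the process after $s_{h+1}$ is exactly the Markov chain generated by $\sigma$. The deviator's continuation after any unmonitored one-shot deviation is then $W_{h+1,i}(s_{h+1})$, and sequential rationality at each pooled message gives the inequality you need. Without that hypothesis you are forced into your fallback of history-dependent recommendations, which proves a different statement from the one with $\sigma_h:\Scal\times\Gamma\to\Delta(\Acal)$.

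A second, smaller gap affects the same step. \eqref{eq:dynobd} is imposed separately at each persistent state $s$, but in Part~I agents need not observe $s_h$. The PBE inequality at a message $m_{h,i}$ therefore averages over $s$ under agent $i$'s posterior. A negative left side of \eqref{eq:dynobd} at one $s$ does not produce a profitable deviation at any message. Pooling the message-level inequalities yields only the left side of \eqref{eq:dynobd} summed over $s$ with weights $\Pr(s_h=s)$. Already at $H=1$, the no-disclosure best reply to the prior violates the state-by-state condition at any $s$ where another action does better. The paper's pooling step has the same feature. You need either agents who observe $s_h$, so that the message-level inequality can be conditioned on $s$ too, or obedience read as averaged over the recommendation's posterior on $s$. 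The latter is the form Theorem~\ref{thm:lqg} actually uses, with $s$ random in $\cov(a_i,s_i)$.
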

\begin{proof}
Fix the information structure $\pi$ and an equilibrium with action strategies mapping message histories to actions, and define the recommendation policy by the equilibrium action law,
\[
\sigma_h(a\mid s,\gamma)\ :=\ \Pr\big[a_h=a\,\big|\,s_h=s,\ \gamma_h=\gamma\big]
\]
under $\pi$ and the equilibrium. Because the transition $P_h$ depends on the realised $(s_h,\gamma_h,a_h)$ alone and not on messages, recommending $a_h\sim\sigma_h$ and having agents obey reproduces, by induction on $h$, the same joint law of $(s_h,\gamma_h,a_h)$ as $\pi$; the designer value and all agent payoffs coincide.

For obedience, when agent $i$ is recommended $a_{h,i}$ under $\sigma$ its posterior over $(s_h,\gamma_h,a_{h,-i})$ is the pooling of the posteriors it held, under $\pi$, at every message that led the equilibrium to play $a_{h,i}$. Two features make the recommended action optimal under this pooled posterior. First, by the model's monitoring structure a within-stage deviation is unobserved and affects the future only through the realised $s_{h+1}$, so the deviator's continuation value is the designer's committed continuation evaluated at $s_{h+1}$, the same function of the deviation under $\sigma$ as under $\pi$. The one-shot-deviation comparison at stage $h$ is therefore identical under the two: the change in current payoff plus $\delta$ times the continuation change transmitted through $s_{h+1}$, which is exactly the differential $\Delta^i_h$ of \eqref{eq:dynobd}. Second, the recommended action was optimal at each pooled message in the equilibrium under $\pi$; since the stage-plus-continuation payoff is affine in the posterior, an action optimal at every message in the pool is optimal at their average. Hence obeying $a_{h,i}$ is optimal under the recommendation's posterior for every $i$ and $h$, which is \eqref{eq:dynobd}. The converse is immediate.
\end{proof}

\begin{remark}[Scope]
The principle holds within the information-design class the model defines: the designer designs information and cannot monitor or contract on within-stage actions. If the designer could condition its continuation on \emph{observed} within-stage deviations, richer history-dependent schemes could enlarge the implementable set and action recommendations would no longer be without loss, a monitoring/mechanism-design problem outside the present scope.
\end{remark}

Because the stage-$h$ constraints in \eqref{eq:designer} involve the continuation values $W_{h+1,i}$, the problem does not separate into independent per-stage programs. The standard device is to carry the agents' \emph{promised continuation utilities} $w=(w_i)_{i\in\Ncal}$ as endogenous state variables \cite{abreu1990toward, sannikov2008continuous}.

Let $J_h(s,w)$ be the largest discounted designer objective attainable from stage $h$ at persistent state $s$ when each agent $i$ is promised continuation utility $w_i$ and obedience holds. Then
\begin{equation}\label{eq:bellman}
J_h(s,w)=\max_{\sigma_h,\ \{w'(\cdot)\}}\ \E\Big[v_h(s,\gamma,a)+\delta\,J_{h+1}\big(s',w'(s')\big)\Big]
\end{equation}
subject to, for all $i,a_i,a_i'$, the dynamic obedience condition \eqref{eq:dynobd} written with $W_{h+1,i}(\cdot)$ replaced by the continuation-utility map $w'_i(\cdot)$, and the promise-keeping identities
\begin{equation}\label{eq:promise}
w_i=\E\big[u_{h,i}(s,\gamma,a)+\delta\,w'_i(s')\big],\qquad i\in\Ncal,
\end{equation}
with terminal condition $J_{H+1}(s,\cdot)=0$. A policy is optimal if and only if it attains \eqref{eq:bellman} at every stage given a self-consistent profile of continuation-utility maps. The recursion makes explicit that the agents' continuation values are co-state variables of the design problem: the price of the dynamic incentive term $\delta\Delta^i_h$ is the shadow value of the promise-keeping constraint \eqref{eq:promise}.

The recursion \eqref{eq:bellman} is solved by an Abreu--Pearce--Stacchetti (APS) set-valued backward induction \cite{abreu1990toward}. Because the stage-$h$ obedience and promise-keeping constraints depend on the continuation only through the promised-utility profile $w'(\cdot)$, the designer need not track the entire continuation policy but only, at each state, the \emph{set} of promised-utility profiles an obedient continuation can deliver. Write $\Ccal_h(s)\subseteq\R^{\Ncal}$ for that set. It is computed backward by Algorithm~\ref{alg:aps}: the designer's optimal value is $\max_{w\in\Ccal_1(s_1)}J_1(s_1,w)$, and the optimal policy is recovered by unrolling the stored maximisers forward.

\begin{algorithm}[t]
\caption{Designer-optimal Markov BCE: APS-style promised-utility backward induction}\label{alg:aps}
\begin{algorithmic}[1]
\State \textbf{Initialize} $\Ccal_{H+1}(s)\gets\{0\}$ and $J_{H+1}(s,\cdot)\gets 0$ for all $s$.
\For{$h=H$ \textbf{down to} $1$}
  \For{each persistent state $s$}
    \State Form the admissible set of promised utilities
    \Statex \quad $\Ccal_h(s)\gets\big\{\,w\in\R^{\Ncal}:\ \exists\ \sigma_h(\cdot\mid s,\gamma)\ \text{and a map}\ w'(\cdot)\ \text{with}\ w'(s')\in\Ccal_{h+1}(s')\ \forall s',$
    \Statex \quad\qquad such that $\sigma_h$ satisfies dynamic obedience \eqref{eq:dynobd} with continuation $w'$, and promise-keeping \eqref{eq:promise} holds $\big\}$.
    \For{each $w\in\Ccal_h(s)$}
      \State $J_h(s,w)\gets\displaystyle\max_{\sigma_h,\,w'(\cdot)}\ \E\big[v_h(s,\gamma,a)+\delta\,J_{h+1}(s',w'(s'))\big]$ over the $(\sigma_h,w')$ admitting $w$.
    \EndFor
  \EndFor
\EndFor
\State \textbf{Output} optimal value $\max_{w\in\Ccal_1(s_1)}J_1(s_1,w)$ and the attaining policy $\sigma^\star$ by forward unrolling the stored maximisers.
\end{algorithmic}
\end{algorithm}

In general the sets $\Ccal_h(s)$ are computed by the standard APS monotone outer-approximation iteration. In the LQG class they admit a finite parametrisation: the continuation enters only through the quadratic coefficient $\Pi_{h+1,i}$ of Lemma~\ref{lem:quadratic}, obedience becomes the covariance condition of Theorem~\ref{thm:lqg}, and the set-valued recursion of Algorithm~\ref{alg:aps} reduces to the parametric Riccati recursion \eqref{eq:riccati} of Lemma~\ref{lem:quadratic}; in the infinite-horizon stationary case it becomes the discounted algebraic Riccati fixed point of Proposition~\ref{prop:stationary}, solved by value iteration.

\begin{remark}[Computing the sets in general]\label{rem:aps_comp}
Outside structured classes the sets $\Ccal_h(s)$ are continuous set-valued objects and exact computation is not tractable. Under a public randomisation device each $\Ccal_h(s)$ is convex, and the standard remedy applies: inner and outer polytope approximations of the APS operator converge monotonically to $\Ccal_h(s)$ and bracket the optimal value \cite{abreu1990toward, judd2003computing}. The LQG collapse above is what turns Algorithm~\ref{alg:aps} from a conceptual scheme into an exact procedure; its complexity in that class is given in Proposition~\ref{prop:complexity}.
\end{remark}

The designer's optimal value sits between two policy-free benchmarks, which also bound what commitment buys.

\begin{proposition}[Value bounds and commitment monotonicity]\label{prop:valuebounds}
Let $J^\star(s_1)=\max_{w\in\Ccal_1(s_1)}J_1(s_1,w)$ be the designer-optimal value. Let $V^{\mathrm{bab}}(s_1)$ be the designer's value when it recommends, in every stage, a Bayes--Nash equilibrium of the stage game under the prior (no state-contingent disclosure), and let $V^{\mathrm{fb}}(s_1)$ be the value of the relaxed problem in which the obedience constraints \eqref{eq:dynobd} are dropped and the designer chooses the action profile directly. Then
\[
V^{\mathrm{bab}}(s_1)\ \le\ J^\star(s_1)\ \le\ V^{\mathrm{fb}}(s_1),
\]
and $J^\star$ is non-decreasing in the designer's feasible recommendation set: a richer message space or weaker information constraint weakly raises $J^\star$. The gaps $J^\star-V^{\mathrm{bab}}\ge0$ and $V^{\mathrm{fb}}-J^\star\ge0$ measure, respectively, the value of information design and the price of obedience.
\end{proposition}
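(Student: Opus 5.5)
The plan is to prove both inequalities by a feasibility (set-inclusion) argument on the designer's program \eqref{eq:designer}. The lower bound will follow by exhibiting the no-disclosure policy as a feasible point, and the upper bound by relaxation. Monotonicity is then a restatement of the fact that a maximum over a larger feasible set is weakly larger.

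\textbf{Lower bound.} Let $\sigma^{\mathrm{bab}}$ be the policy that, at each stage $h$ and state $s$, recommends a profile drawn from a Bayes--Nash equilibrium of the stage game under the prior. I will read this as a correlated or mixed equilibrium profile that does not depend on $\gamma$. The natural reading of $V^{\mathrm{bab}}$ is that this stage equilibrium is taken in the dynamic game, that is, using the stage payoff plus $\delta$ times the continuation value $W_{h+1,i}$ generated by $\sigma^{\mathrm{bab}}$ itself. I will construct it by backward induction on $h$. Start from $W_{H+1,i}\equiv0$. At stage $h$, given $W_{h+1,i}$, form the augmented stage game with payoffs $\E_{\mu_h}[u_{h,i}(s,\gamma,a)+\delta\E_{s'\sim P_h}W_{h+1,i}(s')]$. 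For finite action sets, or under the standing regularity conditions, this game has a Nash equilibrium, which I take as the recommendation. Then define $W_{h,i}$ by \eqref{eq:contval}.

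Because the recommendation is independent of $\gamma$ and is an equilibrium of the augmented game, conditioning on $a_i$ leaves the prior over $\gamma$ unchanged. Equilibrium optimality of $a_i$ against $a_{-i}$ in the augmented game is then exactly \eqref{eq:dynobd}, because the bracket in \eqref{eq:dynobd} is precisely the augmented-payoff difference. So $\sigma^{\mathrm{bab}}$ is a Markov BCE. Its promised-utility profile lies in $\Ccal_1(s_1)$, and its designer value is at most $J^\star(s_1)$.

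The main obstacle is the definitional one just described. If $V^{\mathrm{bab}}$ were instead defined with purely myopic stage equilibria, obedience could fail for far-sighted agents. I will therefore state explicitly that the stage equilibrium is taken in the continuation-augmented game. In the cases of Proposition~\ref{prop:reduction}, this coincides with the myopic one.

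\textbf{Upper bound.} Every dynamically obedient $\sigma$ is a policy mapping $(s,\gamma)$ to $\Delta(\Acal)$, and it is feasible for the relaxed problem in which \eqref{eq:dynobd} is dropped. Both problems have the same objective. Hence $J^\star(s_1)\le V^{\mathrm{fb}}(s_1)$. Here I use that, by Proposition~\ref{prop:revelation} and Algorithm~\ref{alg:aps}, $J^\star$ equals the supremum of \eqref{eq:designer} over obedient $\sigma$. For the attainment of the maximum I will rely on Proposition~\ref{prop:optexist}; alternatively, I can phrase everything with suprema.

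\textbf{Monotonicity and gaps.} Enlarging the message space or weakening an information constraint enlarges the set of implementable joint laws of $(s_h,\gamma_h,a_h)$. By Proposition~\ref{prop:revelation}, this enlarges the set of obedient recommendation policies, and equivalently each $\Ccal_h(s)$, by induction in Algorithm~\ref{alg:aps}. A maximum over a superset is weakly larger. The gap statements are immediate rearrangements of the two inequalities.
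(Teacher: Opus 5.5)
Your proposal is correct and follows the paper's proof. The lower bound comes from exhibiting the no-disclosure equilibrium policy as a feasible Markov BCE, the upper bound from dropping \eqref{eq:dynobd}, and monotonicity from the fact that a maximum cannot fall when its feasible set grows. Your backward-induction construction in the continuation-augmented stage game is what the paper compresses into ``each agent best-responds stage by stage, continuation included'', and relaxing \eqref{eq:designer} instead of the recursion \eqref{eq:bellman}, as the paper does, makes no material difference.
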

\begin{proof}
A stationary Bayes--Nash recommendation is dynamically obedient (each agent best-responds stage by stage, continuation included), hence a Markov BCE and feasible in \eqref{eq:bellman}, giving $V^{\mathrm{bab}}\le J^\star$. Dropping \eqref{eq:dynobd} enlarges the feasible set of \eqref{eq:bellman}, so its maximum cannot fall: $J^\star\le V^{\mathrm{fb}}$. Both bounds are instances of the same fact, that the value of \eqref{eq:bellman} is monotone in the feasible set, which also gives monotonicity of $J^\star$ in the designer's commitment.
\end{proof}

\subsection{The policy / continuation-value fixed point}\label{sec:fixedpoint}
Dynamic obedience \eqref{eq:dynobd} at stage $h$ is stated in terms of the continuation values $W_{h+1,i}$, which are themselves generated by the policy through \eqref{eq:contval}. A Markov BCE is therefore a joint solution of these two dependencies, which we record as a fixed point.

\begin{proposition}[Markov BCE as a fixed point; existence]\label{prop:fixedpoint}
For a policy $\sigma$ let $\mathcal{T}(\sigma)=(W_{h,i})_{h,i}$ be the continuation values \eqref{eq:contval} it induces, and let $\mathcal{O}(\mathcal{W})$ be the set of policies satisfying \eqref{eq:dynobd} when each $W_{h+1,i}$ is taken from $\mathcal{W}$. A policy is a Markov Bayes correlated equilibrium iff $\sigma\in\mathcal{O}(\mathcal{T}(\sigma))$. In finite horizon this fixed point is resolved by backward induction, and a Markov BCE exists whenever the stage obedient set is nonempty at every stage, in particular whenever the stage game admits a Bayes--Nash equilibrium, which for the LQG payoff \eqref{utility} holds under $\Phi+\Phi^\top\succ0$.
\end{proposition}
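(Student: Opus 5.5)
The characterisation is immediate from the definitions, so I will dispose of it first. By Definition~\ref{def:dynobd}, $\sigma$ is a Markov BCE exactly when \eqref{eq:dynobd} holds with $W_{h+1,i}$ equal to the continuation values \eqref{eq:contval} generated by $\sigma$ itself under obedience. That is precisely the statement that $\sigma\in\mathcal{O}(\mathcal{W})$ with $\mathcal{W}=\mathcal{T}(\sigma)$. Both directions are just unpacking notation. The one point to check is that $\mathcal{T}(\sigma)$ is well defined: payoffs are bounded and the horizon is finite, so \eqref{eq:contval} is a finite sum of bounded terms.

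For existence I will resolve the fixed point backward. The key structural observation is triangularity. The stage-$h$ constraint in \eqref{eq:dynobd} involves only $\sigma_h$ and $W_{h+1,i}$. In turn, $W_{h+1,i}$ depends only on $(\sigma_\tau)_{\tau>h}$, through the recursion
\[
W_{h,i}(s)=\E\big[u_{h,i}(s,\gamma,a)+\delta\,\E_{s'\sim P_h(\cdot\mid s,\gamma,a)}W_{h+1,i}(s')\big],
\]
which follows from \eqref{eq:contval} by the Markov property under obedience.

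The backward induction then runs as follows.
\begin{enumerate}
\item Set $W_{H+1,i}\equiv0$.
\item At stage $h$, suppose $(\sigma_\tau)_{\tau>h}$ has been chosen, which fixes $W_{h+1,i}$. Define the augmented stage game at each state $s$ with payoffs
\[
\tilde u_{h,i}(s,\gamma,a):=u_{h,i}(s,\gamma,a)+\delta\,\E_{s'\sim P_h(\cdot\mid s,\gamma,a)}W_{h+1,i}(s').
\]
Dynamic obedience at stage $h$ is exactly the static obedience condition \eqref{eq:static_obd} for this augmented game, because the stage incentive plus $\delta\Delta^i_h$ equals the difference of $\tilde u_{h,i}$.
\item Pick any $\sigma_h(\cdot\mid s,\cdot)$ in the stage obedient set, which is nonempty by hypothesis.
\item Compute $W_{h,i}$ from the recursion above and descend to stage $h-1$.
\end{enumerate}
The resulting $\sigma$ satisfies $\sigma\in\mathcal{O}(\mathcal{T}(\sigma))$, because the $W$'s used in each step are exactly those generated by the later stages of $\sigma$, and the later stages are never revised.

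For the sufficient condition I will use a standard fact: the decision rule induced by a Bayes--Nash equilibrium of a game is a BCE of that game. This is the converse direction of the revelation principle in Bergemann--Morris; the argument mirrors Proposition~\ref{prop:revelation}, pooling messages and using affinity of payoffs in the posterior. So a BNE of the augmented stage game gives a nonempty stage obedient set.

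The main obstacle is the last clause. The hypothesis concerns the stage game, but what is needed is a BNE of the \emph{augmented} game with payoffs $\tilde u_{h,i}$, whose continuation term may depend on actions. For the LQG payoff \eqref{utility}, I expect the continuation to be quadratic in $s'$ (as in Lemma~\ref{lem:quadratic}) and $s'$ to be linear in $a$. Then $\tilde u_{h,i}$ remains linear-quadratic, with the modified interaction matrix $\widetilde\Phi_h$ of Theorem~\ref{thm:lqg}.

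Existence and uniqueness of the LQG BNE then follows from the usual argument. The first-order conditions are linear in the agents' strategies, and $\Phi+\Phi^\top\succ0$ gives strict monotonicity, so the system is uniquely solvable. Equivalently, the game is a strictly monotone concave game, and a Rosen-type existence theorem applies.

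I would try to show that $\widetilde\Phi_h+\widetilde\Phi_h^\top\succ0$ is inherited from $\Phi+\Phi^\top\succ0$ when $\Pi_{h+1,i}\preceq0$. This sign is plausible if continuation values are concave in the state, as for quadratic losses. If that fails, I would state the LQG clause under the hypothesis on $\widetilde\Phi_h$ instead, or note that for finite action sets Nash's theorem already gives a mixed BNE of the augmented game, so existence is unconditional in the finite case.
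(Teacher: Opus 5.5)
Your characterisation and backward induction are the paper's argument: $W_{h+1,i}$ depends only on $(\sigma_\tau)_{\tau>h}$, you pick $\sigma_h$ in the stage obedient set given $W_{h+1,i}$, and later stages are never revised. Rewriting stage-$h$ dynamic obedience as static obedience for the augmented payoff $\tilde u_{h,i}$ is a clean way to state what the paper calls a system of linear inequalities in $\sigma_h$. On the sufficient condition you have found a real gap in the paper's proof. The paper recommends a Bayes--Nash equilibrium of the \emph{myopic} stage game and invokes strict concavity of \eqref{utility} under $\Phi+\Phi^\top\succ0$. When $B_h\neq0$ and $\delta>0$, such a rule need not satisfy \eqref{eq:dynobd}. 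What is needed is an equilibrium of your augmented game.

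The step that fails is your proposed repair of the LQG clause, and it cannot be repaired. First, the sign you hope for is the wrong one: in this model continuation values are convex in the state, and the paper's own examples take $\Pi>0$, so $\widetilde\Phi_{ii}<h_0$. Even if $\Pi_{h+1,i}\preceq0$ held, you would inherit only $\widetilde\Phi_{h,ii}\ge\Phi_{ii}$, not $\widetilde\Phi_h+\widetilde\Phi_h^\top\succ0$, because the correction $-\delta(B_h^{(i)})^\top\Pi_{h+1,i}B_h^{(j)}$ uses a different matrix in each row. Second, the clause itself is false for $H\ge2$. Take one agent with $\Phi=\phi>0$, scalar $A,B,C$, $d\equiv0$, and $\bar\gamma:=\E\gamma$.
\begin{itemize}
\item At stage $H$, every obedient rule has $a=(s+m)/\phi$ with $m=\E[\gamma\mid a,s]$.
\item Hence $W_H(s)=\phi\,\E[a^2\mid s]=\E[(s+m)^2\mid s]/\phi\ge(s+\bar\gamma)^2/\phi$; for linear-Gaussian rules $\Pi_H=1/\phi$ exactly.
\item At stage $H-1$, the deviation value $Q_{H-1,1}(s,a,a')$ therefore has $a'^2$-coefficient at least $-\phi+\delta B^2/\phi$.
\item If $\delta B^2>\phi^2$, this value is unbounded above in $a'$. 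Every recommendation then violates \eqref{eq:dynobd} at every $s$, for every obedient stage-$H$ continuation.
\end{itemize}
So no Markov BCE exists, even though $\Phi+\Phi^\top=2\phi>0$ and the myopic stage game has a unique equilibrium.

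Your fallback is therefore the correct formulation, not just a contingency. Impose $\widetilde\Phi_h+\widetilde\Phi_h^\top\succ0$ along the constructed continuation. Then $\widetilde\Phi_{h,ii}>0$ and $\widetilde\Phi_h$ is invertible, so the linear complete-information equilibrium of the augmented game exists and can be recommended. That recommendation keeps the continuation quadratic for the next step back. Your remark that finite action sets need no hypothesis at all is also correct.
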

\begin{proof}
The equivalence is the definition: $\sigma$ obeys \eqref{eq:dynobd} against its own induced continuation values iff $\sigma\in\mathcal{O}(\mathcal{T}(\sigma))$. The dependence is acyclic in finite horizon: $W_{h+1,i}$ depends only on $\sigma_{h+1},\dots,\sigma_H$, never on $\sigma_h$. Hence backward induction applies: with $W_{H+1,i}\equiv0$ choose $\sigma_H$ in the (static) stage-$H$ obedient set, which fixes $W_{H,\cdot}$; given $W_{h+1,\cdot}$ choose any $\sigma_h$ satisfying \eqref{eq:dynobd}, a system of linear inequalities in $\sigma_h$ decoupled from earlier stages, which fixes $W_{h,\cdot}$; iterate to $h=1$. The resulting $\sigma$ satisfies \eqref{eq:dynobd} at every stage. Nonemptiness of each stage set holds whenever the stage game has a Bayes--Nash equilibrium (recommend it conditional on $\gamma$); for \eqref{utility} with $\Phi+\Phi^\top\succ0$ the quadratic stage game is strictly concave with a unique such equilibrium.
\end{proof}

Proposition~\ref{prop:fixedpoint} gives existence of \emph{a} Markov BCE; the designer wants the value-maximising one, and Algorithm~\ref{alg:aps} returns its value provided that maximum is attained. Standard regularity makes it so.

\begin{proposition}[Existence of an optimal Markov BCE]\label{prop:optexist}
Suppose the payoff-state space $\Gamma$ and the action space $\Acal$ are compact metric, the stage payoffs $u_{h,i}$ and the designer payoff $v_h$ are bounded and continuous, and each transition kernel $P_h$ is weakly (Feller) continuous. Then for every $h$ and $s$ the set $\Ccal_h(s)$ of implementable promised-utility profiles is compact and the value $J_h(s,\cdot)$ is upper semicontinuous, the designer's problem $\max_{w\in\Ccal_1(s_1)}J_1(s_1,w)$ attains its maximum, and an optimal Markov BCE policy exists and is the one Algorithm~\ref{alg:aps} computes.
\end{proposition}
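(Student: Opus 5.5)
The plan is a backward induction on $h$ that carries a joint invariant. The invariant is that, for every $s$, the set $\Ccal_h(s)$ is compact and $J_h(s,\cdot)$ is upper semicontinuous on it. The base case $h=H+1$ is immediate, since $\Ccal_{H+1}(s)=\{0\}$ and $J_{H+1}\equiv0$.

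For the inductive step I recast stage $h$ as a problem over a compact set of "stage objects". A stage object is a pair $(\sigma_h(\cdot\mid s,\cdot),\,w'(\cdot))$. I identify $\sigma_h$ with the joint law $\mu_h\otimes\sigma_h\in\Delta(\Gamma\times\Acal)$, which has $\Gamma$-marginal $\mu_h$. Because $\Gamma\times\Acal$ is compact metric, this set of joint laws is weakly compact by Prokhorov's theorem, and the marginal restriction is a closed condition.

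The continuation map $w'(\cdot)$ is harder, since it ranges over selections $s'\mapsto w'(s')\in\Ccal_{h+1}(s')$. To make this set compact I pass to the induced law of $(s',w'(s'))$ given $(\gamma,a)$. The cleanest device is to treat the continuation as a joint law on $\Gamma\times\Acal\times\Scal\times\R^{\Ncal}$ whose support lies in the graph of $\Ccal_{h+1}$. This amounts to allowing the continuation promise to be randomised, which is harmless because a public randomisation device is already assumed in Remark~\ref{rem:aps_comp}. Two facts make this set weakly compact: the promises are uniformly bounded, by the bounds on $u$ and the finite horizon; and the graph of $\Ccal_{h+1}$ is closed, by the inductive hypothesis, which I need to strengthen to closedness of the graph in $s'$ (when $\Scal$ is not finite).

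In this representation each constraint becomes an integral of a fixed function against the joint law. The promise-keeping identities \eqref{eq:promise} integrate $u_{h,i}+\delta w_i'$. The obedience inequalities \eqref{eq:dynobd} integrate the stage difference plus $\delta$ times $\int w_i'\,dP_h(\cdot\mid s,\gamma,a)-\int w_i'\,dP_h(\cdot\mid s,\gamma,a_i',a_{-i})$. The payoffs are continuous and bounded, and the Feller property of $P_h$ makes $(\gamma,a)\mapsto\int f\,dP_h$ continuous for continuous bounded $f$. So these functionals are continuous in the weak topology, and the obedience inequalities define closed conditions.

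One technical point is that obedience in \eqref{eq:dynobd} is stated per recommended $a_i$. I rewrite it in its integrated form: for every measurable deviation map $a_i\mapsto a_i'$, following must beat deviating. Since $\Acal_i$ is compact, it is enough to check this for continuous deviation maps, and each such condition is closed. Then $\Ccal_h(s)$ is the image of a compact set under the continuous promise-keeping map, and so it is compact.

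Upper semicontinuity of $J_h(s,\cdot)$ follows from a maximum-theorem argument. The objective $\E[v_h]+\delta\E[J_{h+1}(s',w'(s'))]$ is upper semicontinuous on the compact set of stage objects: the first term is continuous, and the second is upper semicontinuous by the inductive hypothesis together with the Portmanteau theorem applied to u.s.c. functions that are bounded above. The correspondence $w\mapsto$ (feasible stage objects admitting $w$) has closed graph with compact range, hence is upper hemicontinuous. Berge's theorem, in its one-sided form, then shows that the maximum is attained for each $w$ and that $J_h(s,\cdot)$ is upper semicontinuous.

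At $h=1$, an upper semicontinuous function on the compact set $\Ccal_1(s_1)$ attains its maximum. Forward unrolling of the attained maximisers, exactly as in Algorithm~\ref{alg:aps}, gives a policy. By Proposition~\ref{prop:fixedpoint}, the promised utilities coincide with the continuation values $W$ that this policy induces, so the unrolled policy is a Markov BCE, and it is optimal. If the maximisers must be chosen measurably in $s'$, I would invoke a measurable-selection theorem for u.s.c. objectives over compact-valued correspondences.

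I expect the main obstacle to be the compactness and closedness of the continuation-selection set, together with the continuity of the continuation term inside the obedience constraint. The deviation term integrates $w'$ against $P_h(\cdot\mid s,\gamma,a_i',a_{-i})$, while $w'$ is only a measurable selection, so the Feller property alone does not give continuity. My first attempt would be the relaxation to randomised continuations described above, conditioning on $(s',\gamma,a)$, with $\Scal$ taken countable or finite. If that fails, I would assume $\Scal$ finite, which makes $w'$ a finite vector in $\prod_{s'}\Ccal_{h+1}(s')$ and makes everything continuous immediately.
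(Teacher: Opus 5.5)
Your skeleton is the paper's: backward induction carrying compactness of $\Ccal_h(s)$ and upper semicontinuity of $J_h(s,\cdot)$, then Prokhorov compactness and closedness of obedience and promise-keeping. Compactness of $\Ccal_h(s)$ follows as a continuous image, and the argument closes with Berge, Weierstrass and a measurable selection. Identifying $\sigma_h$ with joint laws on $\Gamma\times\Acal$ with marginal $\mu_h$ is the right way to make the paper's ``compact policy space'' precise.

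However, the step you single out as the main obstacle is a genuine gap, and your relaxation does not close it.
\begin{itemize}
\item The deviation term in \eqref{eq:dynobd} needs $w'$ as a function on $\Scal$, integrated against $P_h(\cdot\mid s,\gamma,a_i',a_{-i})$. A joint law of $(\gamma,a,s',w')$ fixes the conditional of $w'$ only almost surely under the on-path law $\mu_h\otimes\sigma_h\otimes P_h(\cdot\mid s,\gamma,a)$. So promises at states reached only by deviating are left unspecified.
\item Letting the promise depend on $(\gamma,a)$ as well as $s'$ makes the continuation depend on the current recommendation. That is outside the class defining $\Ccal_h(s)$, where $w'$ is a function of $s'$ alone.
\item Even for a kernel in $s'$ only, weak Feller continuity does not make $(\gamma,a_i')\mapsto\int w_i'\,dP_h(\cdot\mid s,\gamma,a_i',a_{-i})$ continuous when $w'$ is merely measurable. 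A deterministic continuous transition with $w'$ an indicator already breaks it. Your reduction to continuous deviation maps presupposes exactly this continuity.
\item Randomising promises is not free. It enlarges $\Ccal_h(s)$ and $J_h$ unless $\Ccal_{h+1}(s')$ is convex and $J_{h+1}(s',\cdot)$ concave. The proposition does not assume the public randomisation device; Remark~\ref{rem:aps_comp} invokes it only conditionally.
\item Your Portmanteau step needs a closed graph for $\Ccal_{h+1}$ and joint upper semicontinuity of $J_{h+1}$ in $(s',w')$. Your induction does not establish either.
\end{itemize}

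As written, your argument proves the finite-$\Scal$ case you fall back on, and more generally countable discrete $\Scal$. There $w'$ is a point of the compact product $\prod_{s'}\Ccal_{h+1}(s')$, and $\sum_{s'}w'(s')P_h(s'\mid s,\gamma,a)$ is jointly continuous in $(w',\gamma,a)$ by Feller continuity. Obedience, written with nonnegative continuous test functions in $a_i$ for each fixed $a_i'$, is then a closed condition. Every remaining step goes through with upper semicontinuity of $J_{h+1}(s',\cdot)$ pointwise in $s'$.

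For general $\Scal$ you need hypotheses beyond those stated. For instance, take $P_h$ dominated by a fixed measure with densities continuous in $L^1$ in $(\gamma,a)$, and add public randomisation. Then measurable selections of the convex sets $\Ccal_{h+1}(\cdot)$ form a weak-$*$ compact set in $L^\infty$. On that set the obedience pairing is continuous and the concave integral objective is upper semicontinuous.

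The paper's proof does not supply this step either. It simply asserts that the constraints are closed conditions on $(\sigma_h,w'(\cdot))$ inside a compact policy space, which is precisely what you could not verify.

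One smaller correction: the equality of promises and realised continuation values comes from unrolling promise-keeping \eqref{eq:promise}, not from Proposition~\ref{prop:fixedpoint}. Also, the unrolled policy is Markov in $(s,w)$ rather than in $s$ alone, so calling it a Markov BCE in the sense of Definition~\ref{def:dynobd} needs that caveat.
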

\begin{proof}
Recommendation policies $\sigma_h(\cdot\mid s,\gamma)$ are probability measures on the compact $\Acal$, a weak-$*$ compact set (Prokhorov). Backward induction: $\Ccal_{H+1}(s)=\{0\}$ is compact and $J_{H+1}\equiv0$ is continuous. Assume $\Ccal_{h+1}(\cdot)$ compact-valued and $J_{h+1}$ upper semicontinuous. The stage-$h$ constraints (dynamic obedience \eqref{eq:dynobd} and promise-keeping \eqref{eq:promise}) are, by continuity of $u_{h,i}$ and Feller continuity of $P_h$, closed conditions on $(\sigma_h,w'(\cdot))$, so the feasible set is a closed, hence compact, subset of the compact policy space; $\Ccal_h(s)$ is its image under the continuous promise map \eqref{eq:promise} and is therefore compact. The stage objective $\E[v_h+\delta J_{h+1}(s',w'(s'))]$ is upper semicontinuous in $(\sigma_h,w')$ (Feller continuity and the induction hypothesis), so by Berge's maximum theorem $J_h(s,\cdot)$ is upper semicontinuous and the stage argmax is nonempty and compact-valued. Iterating to $h=1$, $\Ccal_1(s_1)$ is compact and $J_1(s_1,\cdot)$ upper semicontinuous, so the maximum is attained (Weierstrass); a measurable selection of stage maximisers yields the optimal policy, which is exactly the forward unrolling in Algorithm~\ref{alg:aps}.
\end{proof}

\begin{remark}[Designer selection and uniqueness]
The fixed point is generically not unique: like static BCE, dynamic obedience defines a \emph{set} of policies. The designer selects within it by value through the recursion \eqref{eq:bellman}, so the optimal policy is the value-maximising fixed point. By contrast, the continuation values \emph{along a fixed policy} are unique, being the backward evaluation of \eqref{eq:contval}. In the infinite-horizon stationary case the acyclic structure is lost and the fixed point becomes a genuine algebraic one (Section~\ref{sec:stationary}).
\end{remark}

\section{LQG Markov Information Processes}\label{sec:lqg}
We specialise to the linear-quadratic-Gaussian class. In an LQG game, player $i$'s payoff is quadratic,
\begin{equation}\label{utility}
u_{i}(s,\gamma,a) = - \Phi_{i,i}a_{i}^{2} - 2 \sum_{j \neq i}\Phi_{i,j}a_{i}a_{j} + 2(\gamma_{i}+s_{i})a_{i}+d_{i}(a_{-i}, \gamma),
\end{equation}
where the $\Phi_{i,j}$, $i,j\in\mathcal{N}$, are real coefficients with $\Phi_{i,i}>0$, and $d_{i}(a_{-i},\gamma)$ is an arbitrary function of the opponents' actions $a_{-i}\equiv\{a_{j}\}_{j\neq i}$ and the payoff state $\gamma$, with $a\in\mathcal{A}\equiv\mathbb{R}^{n}$ and $\gamma\in\Gamma\equiv\mathbb{R}^{n}$. We collect the coefficients in $\Phi=[\Phi_{i,j}]\in\mathbb{R}^{n\times n}$. The payoff is quadratic in $a_i$ but need not be quadratic in the others' actions or the payoff state, via the term $d_{i}(a_{-i},\gamma)$; this term cannot be influenced by player $i$ and so does not affect its best response. We focus on scalar actions, $a_i\in\mathbb{R}$. We write $s_i$ for the Markov state of agent $i$, forming the $i$-th component of $s\in\mathbb{R}^{n}$.

We pair \eqref{utility} with a controlled linear-Gaussian transition,
\begin{equation}\label{eq:transition}
s_{h+1}=A_h s_h+B_h a_h+C_h\gamma_h+\varepsilon_h,\qquad \varepsilon_h\sim\mathcal{N}(0,\Sigma_\varepsilon),\ \ \varepsilon_h\perp(s_h,\gamma_h,a_h),
\end{equation}
with $A_h,B_h,C_h\in\R^{n\times n}$, and we write $B_h^{(i)}$ for the $i$-th column of $B_h$ (the channel through which $a_i$ moves the state). The prior on $(\gamma_h,s_h)$ and the recommendation policy are jointly Gaussian, so that $(\gamma_h,s_h,a_h)$ is jointly Gaussian under the designer's policy and conditional expectations are affine. Throughout, $\E$ and $\cov$ are taken under this joint law at the stage in question.

\begin{lemma}[Quadratic continuation values]\label{lem:quadratic}
Under \eqref{utility}--\eqref{eq:transition} and a linear-Gaussian recommendation policy, each agent's continuation value \eqref{eq:contval} is quadratic,
\begin{equation}\label{eq:Wquad}
W_{h,i}(s)=s^\top\Pi_{h,i}\,s+\pi_{h,i}^\top s+c_{h,i},\qquad \Pi_{H+1,i}=0,\ \pi_{H+1,i}=0,\ c_{H+1,i}=0,
\end{equation}
where $(\Pi_{h,i},\pi_{h,i},c_{h,i})$ solve the backward recursion obtained by substituting the policy-induced affine law of $a_h$ in $(\gamma_h,s_h)$ and the transition \eqref{eq:transition} into $W_{h,i}(s)=\E[u_{h,i}(s,\gamma,a)+\delta W_{h+1,i}(s')\mid s_h=s]$ and collecting terms by degree in $s$. In particular the quadratic coefficient satisfies the finite-horizon Riccati recursion
\begin{equation}\label{eq:riccati}
\Pi_{h,i}\ =\ \delta\,A_h^\top\Pi_{h+1,i}A_h+R_{h,i},\qquad \Pi_{H+1,i}=0,
\end{equation}
where $R_{h,i}$ gathers the stage-payoff and policy-dependent quadratic terms; the recursion is \emph{parametric} in the recommendation policy through $R_{h,i}$.
\end{lemma}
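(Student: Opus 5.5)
We argue by backward induction on $h$, starting from $W_{H+1,i}\equiv 0$, which is trivially of the form \eqref{eq:Wquad} with zero coefficients. The inductive hypothesis is that $W_{h+1,i}(s')=s'^\top\Pi_{h+1,i}s'+\pi_{h+1,i}^\top s'+c_{h+1,i}$. We then show that
\[
W_{h,i}(s)=\E\big[u_{h,i}(s,\gamma,a)+\delta W_{h+1,i}(s')\mid s_h=s\big]
\]
is again quadratic in $s$, and we read off the coefficient recursion.

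The first ingredient is the meaning of a \emph{linear-Gaussian recommendation policy}. We take it to mean that, conditional on $(s_h,\gamma_h)=(s,\gamma)$, the profile is $a_h=K_h s+L_h\gamma+k_h+\eta_h$. Here $\eta_h$ is Gaussian noise, independent of $(s,\gamma,\varepsilon_h)$, with covariance $\Sigma_{\eta,h}$. We also assume that $\gamma_h\sim\mu_h$ is Gaussian with mean $m_h$ and covariance $\Sigma_{\gamma,h}$ and is independent of $s_h$. If the paper intends $\gamma_h$ to be correlated with $s_h$, we replace it by its affine conditional mean given $s$ plus an independent Gaussian residual. With this form the conditional law of $(\gamma,a)$ given $s$ has a mean affine in $s$ and a covariance that does not depend on $s$.

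We then handle the stage payoff and the continuation term in turn.

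\textbf{Stage payoff.} Substituting the policy into \eqref{utility}, the terms $-\Phi_{ii}a_i^2$, $-2\Phi_{ij}a_ia_j$ and $2(\gamma_i+s_i)a_i$ are quadratic polynomials in $(s,\gamma,\eta)$. Their conditional expectations given $s$ are therefore quadratic in $s$; the second moments of $\gamma$ and $\eta$ contribute only to the constant.

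The term $d_i(a_{-i},\gamma)$ is arbitrary, and this is the delicate point. One option is to assume it is at most quadratic, as in the standard Bergemann--Morris LQG setting. The other is to note that the lemma concerns incentives, and $d_i$ does not affect any differential $\Delta^i_h$ or best response. In that case we prove the statement for $u_i-d_i$ and fold $\E[d_i\mid s]$ into a policy-dependent remainder. I would state the first option and remark on the second.

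\textbf{Continuation term.} We have $s'=(A_h+B_hK_h)s+(B_hL_h+C_h)\gamma+B_hk_h+B_h\eta_h+\varepsilon_h$. Taking the conditional expectation of $s'^\top\Pi_{h+1,i}s'$ given $s$ yields three parts:
\begin{itemize}
\item the quadratic form $s^\top(A_h+B_hK_h)^\top\Pi_{h+1,i}(A_h+B_hK_h)s$;
\item cross terms linear in $s$, coming from the mean of $\gamma$ and from $k_h$;
\item a constant term equal to $\Tr(\Pi_{h+1,i}\,\cdot)$ applied to the covariances of $\gamma$, $\eta$ and $\varepsilon$, which uses the independence of $\varepsilon_h$ assumed in \eqref{eq:transition}.
\end{itemize}
The linear piece $\pi_{h+1,i}^\top s'$ contributes terms that are linear in $s$ or constant.

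Collecting terms by degree gives $(\Pi_{h,i},\pi_{h,i},c_{h,i})$. The quadratic coefficient takes the form \eqref{eq:riccati} once we define
\[
R_{h,i}:=\delta\big[(A_h+B_hK_h)^\top\Pi_{h+1,i}(A_h+B_hK_h)-A_h^\top\Pi_{h+1,i}A_h\big]+(\text{stage quadratic in }s),
\]
where the stage quadratic collects the $s$-quadratic parts of $-\Phi a a$ and $2s_ia_i$. After symmetrising, these are
\[
-e_i^\top K_h^\top\Phi K_h\ \text{(row $i$ terms)}+e_ie_i^\top K_h+K_h^\top e_ie_i^\top .
\]
This is exactly the sense in which the lemma says $R_{h,i}$ gathers policy-dependent terms, and it makes the parametric dependence on $\sigma$ (through $K_h$) explicit.

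The main obstacle is not the algebra, which is routine completion of Gaussian moments. It is pinning down the assumptions needed for quadratic closure: the affine form of the policy, the Gaussian and independent structure of $\gamma_h$, and a quadratic or ignorable $d_i$. I would state these explicitly at the start of the proof.
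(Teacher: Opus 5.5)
Your argument is the paper's own: backward induction from $W_{H+1,i}\equiv 0$, with the affine policy and affine transition making the integrand quadratic, a Gaussian conditional expectation given $s$, and matching of degrees. Your explicit closed-loop correction $\delta[(A_h+B_hK_h)^\top\Pi_{h+1,i}(A_h+B_hK_h)-A_h^\top\Pi_{h+1,i}A_h]$ inside $R_{h,i}$ spells out what the paper leaves implicit. Your caveat that an arbitrary $d_i$ breaks quadratic closure is also correct, and the paper's proof glosses over it by simply asserting that the integrand is quadratic.

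Keep your first option (quadratic $d_i$), but drop the justification you give for the second. In this dynamic model $d_i$ is not incentive-irrelevant. Agent $i$'s deviation moves $s_{h+1}$ through $B_h^{(i)}$, and under a state-contingent policy this shifts $a_{-i,h+1}$ and hence $\E[d_i(a_{-i,h+1},\gamma_{h+1})\mid s_{h+1}]$. So $d_i$ enters $W_{h+1,i}$ and therefore $\Delta^i_h$. In any case, stripping $d_i$ would not deliver the quadratic form the lemma asserts.
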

\begin{proof}
Backward induction. $W_{H+1,i}\equiv0$ is \eqref{eq:Wquad} with zero coefficients. If $W_{h+1,i}$ is quadratic, then since $a_h$ is affine in $(\gamma_h,s_h)$ under the policy and $s'$ is affine in $(s_h,\gamma_h,a_h)$ by \eqref{eq:transition}, the integrand $u_{h,i}+\delta W_{h+1,i}(s')$ is quadratic in $(s_h,\gamma_h,a_h,\varepsilon_h)$; taking the Gaussian expectation given $s_h=s$ leaves a quadratic in $s$ (the noise contributes only the constant $\delta\,\Tr(\Pi_{h+1,i}\Sigma_\varepsilon)$). Matching degrees gives the stated recursion, with the $\delta A_h^\top\Pi_{h+1,i}A_h$ term coming from the $s$-linear part of $\bar s'=\E[s'\mid s]$.
\end{proof}

\begin{theorem}[Dynamic LQG obedience]\label{thm:lqg}
Consider an LQG MKIP with payoff \eqref{utility} and transition \eqref{eq:transition}, and suppose $\Phi+\Phi^\top$ is positive definite. Then dynamic obedience \eqref{eq:dynobd} holds at stage $h$ if and only if, for every $i\in\Ncal$,
\begin{equation}\label{eq:lqg_dynobd}
\begin{aligned}
\sum_{j\in\Ncal}\widetilde\Phi_{h,ij}\,\cov(a_i,a_j)\ =\ \cov(a_i,\gamma_i)+\cov(a_i,s_i)
\quad+\ \delta\,(B_h^{(i)})^\top\Pi_{h+1,i}\big[A_h\,\cov(a_i,s)+C_h\,\cov(a_i,\gamma)\big],
\end{aligned}
\end{equation}
where $\cov(a_i,s)=(\cov(a_i,s_k))_{k\in\Ncal}$, $\cov(a_i,\gamma)=(\cov(a_i,\gamma_k))_{k\in\Ncal}$, the continuation matrices $\Pi_{h+1,i}$ are those of Lemma~\ref{lem:quadratic}, and the \emph{effective interaction matrix} is
\begin{equation}\label{eq:Htilde}
\widetilde\Phi_{h,ij}\ :=\ \Phi_{ij}\ -\ \delta\,(B_h^{(i)})^\top\Pi_{h+1,i}\,B_h^{(j)} .
\end{equation}
When the transition is action-independent ($B_h=0$) or $\delta=0$, \eqref{eq:lqg_dynobd} reduces to the static Bergemann--Morris condition
\begin{equation}\label{eq:static_cov}
\sum_{j\in\Ncal}\Phi_{ij}\,\cov(a_i,a_j)=\cov(a_i,\gamma_i)+\cov(a_i,s_i),\qquad i\in\Ncal .
\end{equation}
\end{theorem}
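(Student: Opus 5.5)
The plan is to follow the Bergemann--Morris LQG argument, with a one-shot deviation whose continuation consequences are priced by the quadratic value of Lemma~\ref{lem:quadratic}. With actions in $\R$, dynamic obedience \eqref{eq:dynobd} says that $a_i$ maximises the conditional payoff $Q_{h,i}(s,a_i,\cdot)$ of \eqref{eq:Q}. In the LQG class, conditioning on the recommendation alone (the agent need not observe $s$) gives a quadratic function of the deviation $a_i'$. Obedience will therefore be equivalent to the first-order condition at $a_i'=a_i$, provided the second-order coefficient is negative. I then turn that pointwise first-order condition, which holds for almost every $a_i$, into the stated covariance identity using joint Gaussianity.

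\emph{Step 1 (the deviation payoff).} Substitute \eqref{utility} and \eqref{eq:transition} into \eqref{eq:Q}, using $W_{h+1,i}(s')=s'^\top\Pi_{h+1,i}s'+\pi_{h+1,i}^\top s'+c_{h+1,i}$. Under deviation, $s'=A_hs+B_h(a_i',a_{-i})+C_h\gamma+\varepsilon$, so the dependence on $a_i'$ enters through $B_h^{(i)}a_i'$. Collecting terms in $a_i'$ gives
\[
Q=-\big(\Phi_{ii}-\delta (B_h^{(i)})^\top\Pi_{h+1,i}B_h^{(i)}\big)a_i'^2+2a_i'\,\E\Big[\gamma_i+s_i-\sum_{j\ne i}\widetilde\Phi_{h,ij}a_j+\delta (B_h^{(i)})^\top\Pi_{h+1,i}(A_hs+C_h\gamma)+\tfrac{\delta}{2}(B_h^{(i)})^\top\pi_{h+1,i}\,\Big|\,a_i\Big]+\text{const}.
\]
The symmetric part of $\Pi_{h+1,i}$ is what enters here, and I take $\Pi$ to be symmetric without loss. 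The first-order condition at $a_i'=a_i$ is therefore
\[
\widetilde\Phi_{h,ii}a_i+\sum_{j\ne i}\widetilde\Phi_{h,ij}\E[a_j\mid a_i]=\E\big[\gamma_i+s_i+\delta (B_h^{(i)})^\top\Pi_{h+1,i}(A_hs+C_h\gamma)\mid a_i\big]+\kappa_{h,i},
\]
where $\kappa_{h,i}$ is a constant.

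\emph{Step 2 (from pointwise to covariance).} Under joint Gaussianity the conditional expectations are affine in $a_i$, with slopes $\cov(a_i,\cdot)/\var(a_i)$. An affine identity holding for almost every $a_i$ is equivalent to two conditions: equality of the slopes, and equality of the means. Multiplying the slope condition by $\var(a_i)$ gives exactly \eqref{eq:lqg_dynobd}, with the $j=i$ term $\widetilde\Phi_{h,ii}\var(a_i)$ included in the sum. The mean condition only pins down the policy's intercept, which the designer chooses. I will state this explicitly and treat \eqref{eq:lqg_dynobd} as the characterisation up to the free mean, as in \cite{Bergemann2013}. The degenerate case $\var(a_i)=0$ makes both sides zero and is handled the same way.

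\emph{Step 3 (second-order condition and the reductions).} This is the main obstacle. Sufficiency of the first-order condition requires $\widetilde\Phi_{h,ii}>0$, whereas the hypothesis only gives $\Phi_{ii}>0$ via $\Phi+\Phi^\top\succ0$. My first approach is to argue that $\Pi_{h+1,i}\preceq0$ along the recursion \eqref{eq:riccati}, since the stage payoffs are concave in own state-driven quantities, which would yield $\widetilde\Phi_{h,ii}\ge\Phi_{ii}>0$. If that fails in general, I will impose $\widetilde\Phi_{h,ii}>0$ as the well-posedness condition implicit in the theorem. The reductions are then immediate: setting $B_h=0$ or $\delta=0$ in \eqref{eq:Htilde} and in \eqref{eq:lqg_dynobd} kills every $\delta$-term and gives \eqref{eq:static_cov}, consistent with Proposition~\ref{prop:reduction}.
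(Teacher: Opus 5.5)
Your argument is the paper's. The paper also reduces obedience to the first-order condition in conditional expectation given $a_i$, then multiplies \eqref{eq:foc2} by the centred recommendation and applies the tower property, which is exactly your slope-matching step.

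Both of your caveats are genuine, and the paper's proof passes over them.
\begin{itemize}
\item $\Phi+\Phi^\top\succ0$ does not control $\widetilde\Phi_{h,ii}$. If $\widetilde\Phi_{h,ii}<0$, the deviation payoff is convex and unbounded above, so no policy is obedient, yet \eqref{eq:lqg_dynobd} can still hold.
\item The paper's converse, ``\eqref{eq:lqg_dynobd} implies \eqref{eq:foc2}'', drops the first-moment condition. A constant recommendation satisfies \eqref{eq:lqg_dynobd} trivially without being obedient. So state obedience as \eqref{eq:lqg_dynobd} together with the mean equation, which is a linear system in $\E[a_h]$ with matrix $\widetilde\Phi_h$. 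Treat it as an explicit second condition rather than a free normalisation.
\end{itemize}

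Skip your first approach in Step 3, because $\Pi_{h+1,i}\preceq0$ is false in general. For $n=1$, the fully revealing stage-$H$ policy $a=(\gamma+s)/\Phi_{11}$ is obedient and gives $\Pi_{H,1}=1/\Phi_{11}>0$; the paper's own examples also take $\Pi>0$. Instead, impose $\widetilde\Phi_{h,ii}>0$ directly, as the paper itself does for the perceived curvature in Proposition~\ref{prop:rent}.
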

\begin{proof}
Since $\Phi_{ii}>0$ the stage payoff is strictly concave in $a_i$, and by \eqref{eq:transition} and Lemma~\ref{lem:quadratic} the continuation term $\delta\,\E[W_{h+1,i}(s')\mid a_i]$ is quadratic in the agent's own action; $\Phi+\Phi^\top\succ0$ guarantees the resulting stage best response is well defined. Hence dynamic obedience \eqref{eq:dynobd} holds iff the recommended action satisfies the first-order condition in conditional expectation given the recommendation $a_i$. Writing $\bar s'=A_h s+B_h a+C_h\gamma$ for the conditional mean next state and differentiating $u_{h,i}+\delta W_{h+1,i}(\bar s')$ in $a_i$,
\begin{equation}\label{eq:foc}
\E\Big[-2\Phi_{ii}a_i-2\!\sum_{j\neq i}\Phi_{ij}a_j+2(\gamma_i+s_i)+2\delta(B_h^{(i)})^\top\Pi_{h+1,i}\bar s'+\delta(B_h^{(i)})^\top\pi_{h+1,i}\ \Big|\ a_i\Big]=0 .
\end{equation}
Substituting $\bar s'=A_h s+\sum_k B_h^{(k)}a_k+C_h\gamma$ and collecting the action terms, the coefficient on $a_j$ is $-2(\Phi_{ij}-\delta(B_h^{(i)})^\top\Pi_{h+1,i}B_h^{(j)})=-2\widetilde\Phi_{h,ij}$. Dividing \eqref{eq:foc} by $2$ gives, in conditional expectation,
\begin{equation}\label{eq:foc2}
\begin{aligned}
\sum_{j\in\Ncal}\widetilde\Phi_{h,ij}\,\E[a_j\mid a_i]&=\E[\gamma_i\mid a_i]+\E[s_i\mid a_i]
+\delta(B_h^{(i)})^\top\Pi_{h+1,i}\big(A_h\E[s\mid a_i]+C_h\E[\gamma\mid a_i]\big)
+\tfrac{\delta}{2}(B_h^{(i)})^\top\pi_{h+1,i}.
\end{aligned}
\end{equation}
Multiplying \eqref{eq:foc2} by the centered recommendation $(a_i-\E[a_i])$ and taking expectations, the tower property gives $\E[(a_i-\E a_i)\,\E[X\mid a_i]]=\cov(a_i,X)$; the constant term drops, and we obtain \eqref{eq:lqg_dynobd}. Conversely, under joint normality the conditional expectations in \eqref{eq:foc2} are affine, so \eqref{eq:lqg_dynobd} implies \eqref{eq:foc2} and hence \eqref{eq:dynobd}. Setting $B_h=0$ kills both the $\widetilde\Phi$ correction and the bracketed term, and $\delta=0$ does likewise, leaving \eqref{eq:static_cov}, the second-moment characterisation of \cite{Bergemann2013}.
\end{proof}

\begin{remark}[What far-sightedness adds]
Relative to the static condition \eqref{eq:static_cov}, Theorem~\ref{thm:lqg} shows that far-sighted agents change the obedience condition in two ways. First, the strategic-interaction matrix is shifted from $\Phi$ to $\widetilde\Phi_h$: the controlled transition makes agents' continuation values interact through the state even where their stage payoffs do not. Second, a new term couples the recommendation to the persistent and payoff states through the transition kernel. Both vanish when the agent's action cannot move the state ($B_h=0$), yielding the myopic-receiver case.
\end{remark}

\subsection{Infinite-horizon stationary case}\label{sec:stationary}
Take the primitives time-homogeneous ($A_h\equiv A$, $B_h\equiv B$, $C_h\equiv C$, $\mu_h\equiv\mu$), let $H\to\infty$, and seek a stationary policy with stationary continuation value $W_i(s)=s^\top\Pi_i s+\pi_i^\top s+c_i$. The backward recursion of Lemma~\ref{lem:quadratic} becomes the algebraic fixed point
\begin{equation}\label{eq:are}
\Pi_i\ =\ \delta\,A^\top\Pi_i A\ +\ R_i(\Xi;\Pi_i),
\end{equation}
where $\Xi$ collects the stationary second moments of $(\gamma,s,a)$ under the policy and $R_i$ is the quadratic-in-$s$ part of the stage return. Because the obedient policy's own-curvature is renormalised to $\widetilde\Phi_{ii}=\Phi_{ii}-\delta(B^{(i)})^\top\Pi_iB^{(i)}$ by Theorem~\ref{thm:lqg}, the policy that $R_i$ is evaluated at depends on $\Pi_i$, so \eqref{eq:are} is a genuine discounted \emph{algebraic Riccati} equation rather than a linear Lyapunov equation. A stationary Markov BCE is a joint fixed point $(\Xi^\star,\Pi^\star)$ of \eqref{eq:are} and the stationary obedience condition \eqref{eq:lqg_dynobd} evaluated with $\widetilde\Phi(\Pi^\star)$.

\begin{proposition}[Existence of the stationary fixed point]\label{prop:stationary}
Suppose $\delta\,\|A\|^2<1$. Then for each fixed admissible stationary policy (hence fixed $R_i$), the map $\mathcal{L}(\Pi)=\delta A^\top\Pi A+R_i$ is a contraction of modulus $\delta\|A\|^2$ in the spectral norm, with unique fixed point the convergent Neumann series $\Pi_i=\sum_{k\ge0}\delta^k(A^\top)^kR_iA^k$. If in addition the policy dependence $\Pi_i\mapsto R_i(\Xi(\Pi_i);\Pi_i)$ is Lipschitz with constant $L$ and $\delta\|A\|^2+\delta L<1$, the coupled map has a unique fixed point $(\Xi^\star,\Pi^\star)$, the stationary Markov BCE, and the iteration $\Pi^{(k+1)}=\delta A^\top\Pi^{(k)}A+R_i(\Xi(\Pi^{(k)});\Pi^{(k)})$ converges to it geometrically.
\end{proposition}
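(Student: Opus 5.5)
The plan has two parts: the linear (fixed-policy) part, then the coupled part.

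\textbf{Fixed policy.} With $R_i$ fixed, consider $\mathcal{L}(\Pi)=\delta A^\top\Pi A+R_i$ on symmetric $n\times n$ matrices with the spectral norm; this is a complete metric space.
For any $\Pi,\Pi'$, submultiplicativity and $\|A^\top\|=\|A\|$ give
\[
\|\mathcal{L}(\Pi)-\mathcal{L}(\Pi')\|=\delta\|A^\top(\Pi-\Pi')A\|\le\delta\|A\|^2\|\Pi-\Pi'\|.
\]
So $\mathcal{L}$ is a contraction of modulus $\delta\|A\|^2<1$, and Banach's theorem gives a unique fixed point.

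To identify the fixed point, iterate from $\Pi^{(0)}=0$. By induction, $\Pi^{(m)}=\sum_{k=0}^{m-1}\delta^k(A^\top)^kR_iA^k$. The terms are bounded by $(\delta\|A\|^2)^k\|R_i\|$, so the series converges absolutely. Its limit satisfies $\Pi=\delta A^\top\Pi A+R_i$ by shifting the index, so it is the unique fixed point.

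Interpretively, this is the $H\to\infty$ limit of the recursion \eqref{eq:riccati} of Lemma~\ref{lem:quadratic} with time-homogeneous $R_{h,i}\equiv R_i$. The finite-horizon $\Pi_{1,i}$ is exactly the partial sum.

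\textbf{Coupled map.} Define $\mathcal{G}(\Pi)=\delta A^\top\Pi A+R_i(\Xi(\Pi);\Pi)$. Here $\Xi(\Pi)$ is the stationary second-moment profile of the obedient policy determined by \eqref{eq:lqg_dynobd} with $\widetilde\Phi(\Pi)$. For several agents, I would work with the stacked tuple $(\Pi_i)_i$ under the max norm, and read the Lipschitz hypothesis accordingly.

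The triangle inequality gives
\[
\|\mathcal{G}(\Pi)-\mathcal{G}(\Pi')\|\le\delta\|A\|^2\|\Pi-\Pi'\|+\|R_i(\Xi(\Pi);\Pi)-R_i(\Xi(\Pi');\Pi')\|.
\]
The hypothesis is that the last term is at most $\delta L\|\Pi-\Pi'\|$. This reading matches the stated condition $\delta\|A\|^2+\delta L<1$, and I would state it explicitly.

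Then $\mathcal{G}$ is a contraction on the closed set of admissible $\Pi$, and Banach gives a unique $\Pi^\star$ and geometric convergence of $\Pi^{(k)}$ at rate $\delta\|A\|^2+\delta L$. Setting $\Xi^\star=\Xi(\Pi^\star)$ makes the pair consistent with both \eqref{eq:are} and the stationary obedience condition. It is therefore a stationary Markov BCE, by Theorem~\ref{thm:lqg} applied stage by stage with $\Pi_{h+1,i}\equiv\Pi^\star_i$.

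\textbf{Main obstacle: admissibility.} The iteration must stay in the domain where $\Xi(\Pi)$ is defined. This means $\widetilde\Phi(\Pi)+\widetilde\Phi(\Pi)^\top\succ0$, so that the stage best response is well defined (as in Theorem~\ref{thm:lqg}), and the policy keeps the state stationary.

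My first attempt would be to restrict to a closed ball $\{\|\Pi\|\le r\}$ on which $\delta\|B\|^2r<\tfrac12\lambda_{\min}(\Phi+\Phi^\top)$ and the Lipschitz bound holds. Then $\widetilde\Phi$ stays positive definite on the ball. Next I would show $\mathcal{G}$ maps the ball into itself using
\[
\|\mathcal{G}(\Pi)\|\le(\delta\|A\|^2+\delta L)r+\|\mathcal{G}(0)\|,
\]
which requires $r\ge\|\mathcal{G}(0)\|/(1-\delta\|A\|^2-\delta L)$. If no such $r$ exists, I would fold this invariance into the meaning of "admissible" in the statement.
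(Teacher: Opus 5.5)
Your proposal is correct and follows the paper's proof essentially step for step: the submultiplicativity estimate $\delta\|A^\top(\Pi-\Pi')A\|\le\delta\|A\|^2\|\Pi-\Pi'\|$, iteration from $\Pi=0$ to identify the Neumann series, and Banach's theorem for the coupled map with modulus $\delta\|A\|^2+\delta L$. Your extra care goes beyond the paper in two places: the paper simply asserts the $\delta L$ contribution, which is valid only under your reading that the Lipschitz bound on $R_i$ is $\delta L$ (natural, since $\Pi$ enters the stage problem only through the discounted continuation; a literal constant $L$ would give modulus $\delta\|A\|^2+L$), and it never checks the self-map/admissibility condition you identify as the main obstacle.
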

\begin{proof}
For fixed $R_i$, $\|\mathcal{L}(\Pi)-\mathcal{L}(\Pi')\|=\delta\|A^\top(\Pi-\Pi')A\|\le\delta\|A\|^2\|\Pi-\Pi'\|$, a contraction when $\delta\|A\|^2<1$; iterating from $\Pi=0$ gives the Neumann series, convergent under the same bound. Letting $R_i$ vary with $\Pi_i$ through the policy adds a Lipschitz-$L$ term, so the composite map has modulus at most $\delta\|A\|^2+\delta L$; under $\delta\|A\|^2+\delta L<1$ Banach's fixed-point theorem gives existence, uniqueness, and geometric convergence of the stated iteration.
\end{proof}

In the scalar symmetric example of Section~\ref{sec:examples}, \eqref{eq:are} reads $\Pi=\delta A^2\Pi+R(\Pi)$ with contraction modulus $\delta A^2<1$; at a fixed policy $\Pi=R/(1-\delta A^2)$, and the full coupling through $\widetilde\Phi_{ii}=h_0-\delta B^2\Pi$ makes it a scalar algebraic Riccati equation whose stabilising root is the stationary continuation coefficient.

The reduction also settles the computational status of Algorithm~\ref{alg:aps} in this class.

\begin{proposition}[Complexity and convergence of Algorithm~\ref{alg:aps} in the LQG class]\label{prop:complexity}
Consider the LQG class of Theorem~\ref{thm:lqg} with state dimension $n$. (i) In finite horizon, Algorithm~\ref{alg:aps} reduces to $H$ stage problems solved backward: for given $\Pi_{h+1,i}$, the stage-$h$ designer problem is a semidefinite program in the $O(n^2)$ joint second moments of $(\gamma_h,s_h,a_h)$ (the objective is linear and each obedience condition \eqref{eq:lqg_dynobd} affine in these moments, over the positive-semidefinite cone with the marginals of $(\gamma_h,s_h)$ fixed), followed by the Riccati update \eqref{eq:riccati} at cost $O(n^3)$. (ii) In the stationary case, value iteration on \eqref{eq:are} converges geometrically with modulus $q:=\delta\|A\|^2+\delta L<1$ of Proposition~\ref{prop:stationary}, so an $\varepsilon$-accurate fixed point is reached in at most $\lceil\log(\varepsilon_0/\varepsilon)/\log(1/q)\rceil$ iterations, where $\varepsilon_0$ is the initial error.
\end{proposition}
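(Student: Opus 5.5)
Part (i) goes by backward induction over $h$, reading the recursion of Algorithm~\ref{alg:aps} through Lemma~\ref{lem:quadratic} and Theorem~\ref{thm:lqg}. At stage $h$, suppose $\Pi_{h+1,i}$ is given; for $h=H$ it is zero by \eqref{eq:Wquad}. Then the effective matrix $\widetilde\Phi_h$ in \eqref{eq:Htilde} and the coefficient $\delta(B_h^{(i)})^\top\Pi_{h+1,i}$ are fixed numbers.

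By Theorem~\ref{thm:lqg}, obedience at stage $h$ is exactly \eqref{eq:lqg_dynobd}. Every term there is a covariance, namely an entry of the joint second-moment matrix $\Xi_h$ of $(\gamma_h,s_h,a_h)$, so each condition is affine in $\Xi_h$.

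Implementable policies are those inducing a jointly Gaussian law. I will therefore parametrise them by $\Xi_h\succeq0$, with the $(\gamma_h,s_h)$ block fixed to the prior and the law of $s_h$ inherited from the previous stage. This gives $O(n^2)$ free entries: the $(a,a)$ block and the $(a,(\gamma,s))$ cross block.

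The designer's stage objective $\E[v_h]$ is quadratic in the variables, so its expectation is linear in $\Xi_h$ and the means. The continuation term $\delta\E[J_{h+1}]$ is, in the LQG collapse, a quadratic form in $s'$ with a fixed coefficient. Since $s'$ is affine in $(s,\gamma,a)$, the conditional expectation of this quadratic form is again linear in $\Xi_h$.

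The stage problem is then linear objective plus affine constraints over the PSD cone, i.e.\ an SDP. Any PSD feasible $\Xi_h$ is realised by a Gaussian $a_h$ given $(\gamma_h,s_h)$, by taking the conditional law from the Schur complement. This realisation is what justifies replacing the policy by its moments.

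Once the optimal $\Xi_h$ is found, $R_{h,i}$ is read off from it. The update \eqref{eq:riccati} then needs the products $A_h^\top\Pi_{h+1,i}A_h$, which cost $O(n^3)$ per agent. Repeating this for $h=H,\dots,1$ yields $H$ stage SDPs plus Riccati updates, which is claim (i).

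Part (ii) follows directly from Proposition~\ref{prop:stationary}. The iteration error satisfies $\|\Pi^{(k)}-\Pi^\star\|\le q^k\varepsilon_0$ with $q=\delta\|A\|^2+\delta L<1$. Requiring $q^k\varepsilon_0\le\varepsilon$ and solving for $k$ gives $k\ge\log(\varepsilon_0/\varepsilon)/\log(1/q)$, and taking the ceiling gives the stated bound.

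The main obstacle is the justification in (i) that the promised-utility sets $\Ccal_h(s)$ collapse to the finite parametrisation. The designer's continuation must depend on $w'$ only through quadratic coefficients, so that $J_{h+1}$ contributes a term linear in $\Xi_h$. I would argue this by restricting to linear-Gaussian policies, as Theorem~\ref{thm:lqg} does. In that class the agents' continuation values are quadratics determined by $\Pi_{h+1,i}$ via Lemma~\ref{lem:quadratic}, and the next-stage optimal value is likewise quadratic in $s$ with a coefficient computed backward. I would state the complexity claim relative to that class rather than over all of $\Ccal_h$.
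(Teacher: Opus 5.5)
Your proposal is correct and follows essentially the same route as the paper. In (i), with $\Pi_{h+1,i}$ fixed, $\widetilde\Phi_h$ is constant, so each condition \eqref{eq:lqg_dynobd} is affine in the joint covariance, the objective is linear, the feasible moments form a spectrahedron, and the Riccati step is a single $n\times n$ update; (ii) is the contraction bound $\|\Pi^{(k)}-\Pi^\star\|\le q^k\varepsilon_0$ from Proposition~\ref{prop:stationary} solved for $k$, and your added Schur-complement realisation of feasible moments and explicit restriction to the linear-Gaussian class for the designer's continuation term fill in points the paper's brief proof leaves implicit.
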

\begin{proof}
(i) For fixed $\Pi_{h+1,i}$ the modified interaction matrix $\widetilde\Phi_h$ is constant, so each obedience condition \eqref{eq:lqg_dynobd} is an affine equality in the entries of the joint covariance of $(\gamma_h,s_h,a_h)$; the designer's stage objective $\E[v_h]$ is linear in the same moments for quadratic $v_h$; and the feasible moments form a spectrahedron. Substituting the optimal stage policy into \eqref{eq:riccati} is one $n\times n$ update. (ii) is the contraction estimate of Proposition~\ref{prop:stationary} applied to the sequence $\Pi^{(k+1)}=\mathcal{R}(\Pi^{(k)})$: $\|\Pi^{(k)}-\Pi^\star\|\le q^{k}\|\Pi^{(0)}-\Pi^\star\|$.
\end{proof}
\begin{center}\rule{0.6\linewidth}{0.4pt}\\[4pt]{\large\bfseries Part II: Markov Information Processes with Learning Agents}\\[2pt]\rule{0.6\linewidth}{0.4pt}\end{center}

\section{Agents Who Observe the State and Learn the Law of Motion}\label{sec:learning}
Part~I assumed the transition $P_h$ is common knowledge. We now let agents be uncertain about it and learn it from experience. Two modelling changes accompany this, and it is worth saying at the outset why they are introduced together. Under known dynamics, observing the persistent state is strategically inert: a far-sighted agent that knows $P_h$ already knows how its action propagates to $s_{h+1}$ and can compute its continuation consequences from its posterior alone, so revealing the realisation of $s$ changes the \emph{form} of the obedience condition (it relocates the term $\cov(a_i,s_i)$ into a known intercept, as we show below) without changing what any agent does. Observability acquires strategic content only when the transition is unknown, since an agent can exploit a superior model of the dynamics only by steering a state it can see. We therefore introduce both changes at once: observability is the assumption that makes learning consequential, and learning is what makes observability matter.

\begin{assumption}[Observed state and known payoffs in Part~II]\label{ass:partII}
Throughout Part~II each agent observes the persistent state $s_h$ at the start of stage $h$, in addition to its recommendation, and knows its own stage payoff $u_{h,i}$. Agents remain uncertain about the transient payoff state $\gamma_h$, about which the recommendation is informative, and about the transition kernel $P_h$.
\end{assumption}

Observing $s_h$ turns the learning target, the conditional law $P_h(\cdot\mid s,\gamma,a)$, into a fully observed estimation problem rather than latent-state inference, so an agent's advantage is cleanly a model of the dynamics rather than an inference about a hidden state. Each agent $i$ holds a model $\hat P^{(i)}_h$ of the transition and best-responds under it. The single change from Part~I is in the continuation differential of the dynamic obedience condition: in \eqref{eq:dynobd}--\eqref{eq:Delta} the term that prices the future,
\[
\begin{aligned}
\widehat\Delta^{\,i}_{h}(s,\gamma,a_i,a_{-i};a_i')\ :=\ \E_{s'\sim \hat P^{(i)}_h(\cdot\mid s,\gamma,a_i,a_{-i})}\!\big[\hat W_{h+1,i}(s')\big]
-\ \E_{s'\sim \hat P^{(i)}_h(\cdot\mid s,\gamma,a_i',a_{-i})}\!\big[\hat W_{h+1,i}(s')\big],
\end{aligned}
\]
is computed under the agent's model $\hat P^{(i)}_h$ and its perceived continuation value $\hat W_{h+1,i}$, rather than under the true $P_h$ and $W_{h+1,i}$. The designer, committing against the true dynamics, builds a policy that is obedient under $\Delta^i_h$; an agent evaluates the same recommendation under $\widehat\Delta^{\,i}_h$. The gap between the two is what a superior model of the dynamics is worth, which we now make precise in general and then in closed form for the LQG class.

\section{The Value of Knowing the Dynamics}\label{sec:rent}
Fix a stage $h$, an observed state $s$, a payoff state $\gamma$, and a recommendation $a_i^\star$ to agent $i$. Under its model $\hat P^{(i)}_h$ the agent evaluates an action $a_i'$ by the perceived stage-plus-continuation payoff
\begin{equation}\label{eq:perceived_obj}
\widehat U^{(i)}_h(a_i')\ :=\ \E\Big[\,u_{h,i}(s,\gamma,a_i',a_{-i})\ +\ \delta\,\E_{s'\sim \hat P^{(i)}_h(\cdot\mid s,\gamma,a_i',a_{-i})}\big[\hat W_{h+1,i}(s')\big]\ \Big|\ a_i^\star\Big],
\end{equation}
the analogue of the deviation value $Q_{h,i}$ of \eqref{eq:Q} computed under the agent's own model. We define the agent's \emph{rent} as the perceived gain from its best deviation from the recommendation,
\begin{equation}\label{eq:rent_general}
\mathrm{Rent}_{h,i}\ :=\ \Big[\ \sup_{a_i'}\,\widehat U^{(i)}_h(a_i')\ -\ \widehat U^{(i)}_h(a_i^\star)\ \Big]^{+}.
\end{equation}
This is a well-defined object for any stage payoff and any transition kernel; it requires no linear-quadratic-Gaussian structure.

\begin{proposition}[Rent: non-negativity and zero at the benchmark]\label{prop:rent_general}
For every model $\hat P^{(i)}_h$ the rent \eqref{eq:rent_general} is non-negative. If the agent's model agrees with the truth on the support reachable from $(s,\gamma,a_{-i})$ and its perceived continuation value agrees there, $\hat P^{(i)}_h=P_h$ and $\hat W_{h+1,i}=W_{h+1,i}$ (in particular at the known-dynamics benchmark of Part~I), then $\mathrm{Rent}_{h,i}=0$. Thus the rent is non-negative in general and vanishes exactly when the agent holds no advantage over the model the designer's commitment is built on.
\end{proposition}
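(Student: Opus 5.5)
Non-negativity follows directly from the definition \eqref{eq:rent_general}. The rent is the positive part $[x]^+=\max\{x,0\}$ of a real quantity, so it is non-negative. This holds as long as that quantity is well defined, and it is, since $a_i^\star$ is itself a feasible deviation, which gives $\sup_{a_i'}\widehat U^{(i)}_h(a_i')\ge\widehat U^{(i)}_h(a_i^\star)$. So even before taking the positive part the bracket is non-negative, possibly $+\infty$ if the supremum is unbounded. I will record this observation: it shows that the positive part is only a normalisation, and the real content is in the vanishing claim.

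For the benchmark claim I would proceed in three steps.

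\textbf{Step 1: the perceived objective equals the true deviation value.} Assume $\hat P^{(i)}_h=P_h$ and $\hat W_{h+1,i}=W_{h+1,i}$ on the support reachable from $(s,\gamma,a_{-i})$. Then the integrand in \eqref{eq:perceived_obj} coincides pointwise with the integrand in \eqref{eq:Q}. Hence $\widehat U^{(i)}_h(a_i')=Q_{h,i}(s,a_i^\star,a_i')$ for every $a_i'$, and in particular $\widehat U^{(i)}_h(a_i^\star)=Q_{h,i}(s,a_i^\star,a_i^\star)$. The one conditioning subtlety is that in Part~II the agent observes $s$, so the expectation is conditional on $(s,a_i^\star)$. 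This is exactly the conditioning implicit in \eqref{eq:Q} at a fixed state $s$, and it matches the observed-state version of the Part~I benchmark that Section~\ref{sec:learning} says nests.

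\textbf{Step 2: rewrite dynamic obedience as a comparison of deviation values.} By \eqref{eq:Delta}, the difference $Q_{h,i}(s,a_i^\star,a_i^\star)-Q_{h,i}(s,a_i^\star,a_i')$ equals the conditional expectation, given $a_i^\star$, of the stage incentive plus $\delta\Delta^i_h$. Up to the positive normalising probability of the recommendation, this is the left side of \eqref{eq:dynobd}.

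\textbf{Step 3: conclude the rent is zero.} The designer's policy is a Markov BCE against the true dynamics, so \eqref{eq:dynobd} gives $Q_{h,i}(s,a_i^\star,a_i^\star)\ge Q_{h,i}(s,a_i^\star,a_i')$ for all $a_i'$. Taking the supremum, the bracket in \eqref{eq:rent_general} is $\le0$, and with the first paragraph it equals $0$, so $\mathrm{Rent}_{h,i}=0$.

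For the \emph{exactly} reading, I read the converse as the statement that a positive rent certifies a perceived advantage. This is the contrapositive of the above: if $\mathrm{Rent}_{h,i}>0$, the agent's model or its continuation value must differ from the truth on the reachable support. I would state it in that form and not claim that every misspecified model generates rent, since that is false: a model differing from the truth off the margin relevant to $a_i$ leaves the argmax unchanged.

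The main obstacle is Step 1's conditioning. The rent is defined at a fixed $\gamma$ in the preamble, but the expectation in \eqref{eq:perceived_obj} conditions only on $a_i^\star$ (and $s$). I will read $\gamma$ as integrated out under the recommendation-induced posterior, consistently with \eqref{eq:Q}. I must also check that obedience in \eqref{eq:dynobd} is imposed pointwise in $s$ rather than averaged over $s$. Definition~\ref{def:dynobd} quantifies over every state $s$ and every positive-probability recommendation, so it is pointwise and the comparison goes through.
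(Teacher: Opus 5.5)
Your proof is correct and follows the paper's argument. Non-negativity comes from the positive part together with feasibility of $a_i^\star$ in the supremum. The rent vanishes at the benchmark because $\widehat U^{(i)}_h$ then coincides with $Q_{h,i}(s,a_i^\star,\cdot)$, which true-model dynamic obedience maximises at $a_i^\star$.

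Your added care goes slightly beyond the paper, whose proof establishes only the ``if'' direction. You read $\gamma$ as integrated out under the recommendation-induced posterior, which is needed since obedience averages over $\gamma$. You also decline to claim the false converse that every misspecified model earns positive rent.
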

\begin{proof}
Non-negativity follows from the $[\,\cdot\,]^{+}$ and the feasibility of $a_i^\star$ in the supremum. If $\hat P^{(i)}_h=P_h$ and $\hat W_{h+1,i}=W_{h+1,i}$ on the reachable support, then $\widehat U^{(i)}_h$ coincides with the true deviation value $Q_{h,i}(s,\cdot)$ of \eqref{eq:Q}. The designer's policy satisfies true-model dynamic obedience (Definition~\ref{def:dynobd}), i.e.\ $a_i^\star\in\arg\max_{a_i'}Q_{h,i}(s,a_i^\star,a_i')$, so the bracketed difference in \eqref{eq:rent_general} is non-positive and the rent is zero.
\end{proof}

The rent is also controlled, in general, by how far the agent's perceived objective sits from the truth, which yields monotonicity and a quantitative form of the zero-at-benchmark property.

\begin{proposition}[Rent bound and monotonicity]\label{prop:rentbound}
Write $U^{(i)}_h(\cdot)=Q_{h,i}(s,a_i^\star,\cdot)$ for the agent's true deviation value \eqref{eq:Q} and $\widehat U^{(i)}_h$ for its perceived counterpart \eqref{eq:perceived_obj}. Then
\begin{equation}\label{eq:rentbound}
\mathrm{Rent}_{h,i}\ \le\ 2\,\big\|\widehat U^{(i)}_h-U^{(i)}_h\big\|_{\infty},
\end{equation}
so the rent is Lipschitz in the agent's misspecification: it vanishes as the perceived objective converges uniformly to the truth, refining Proposition~\ref{prop:rent_general}. Consequently the worst-case rent over a class of models $\mathcal{G}$, $\overline{\mathrm{Rent}}_{h,i}(\mathcal{G}):=\sup_{\hat P^{(i)}_h\in\mathcal{G}}\mathrm{Rent}_{h,i}$, is non-decreasing in $\mathcal{G}$ and equals zero at $\mathcal{G}=\{P_h\}$.
\end{proposition}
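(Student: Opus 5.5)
The plan is the standard perturbation argument for maximisers. Write $\varepsilon:=\|\widehat U^{(i)}_h-U^{(i)}_h\|_\infty$, the supremum taken over alternatives $a_i'\in\Acal_i$ at the fixed $(h,s,a_i^\star)$. By true-model dynamic obedience of the designer's policy (Definition~\ref{def:dynobd}, in the one-shot form used in the proof of Proposition~\ref{prop:rent_general}), $a_i^\star$ maximises $U^{(i)}_h(\cdot)=Q_{h,i}(s,a_i^\star,\cdot)$. Hence $U^{(i)}_h(a_i')\le U^{(i)}_h(a_i^\star)$ for every $a_i'$.

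For any $a_i'$ I will split the perceived gain as
\[
\widehat U^{(i)}_h(a_i')-\widehat U^{(i)}_h(a_i^\star)=\big[\widehat U^{(i)}_h(a_i')-U^{(i)}_h(a_i')\big]+\big[U^{(i)}_h(a_i')-U^{(i)}_h(a_i^\star)\big]+\big[U^{(i)}_h(a_i^\star)-\widehat U^{(i)}_h(a_i^\star)\big].
\]
The first and third brackets are each at most $\varepsilon$, and the middle bracket is non-positive by obedience. Taking the supremum over $a_i'$ bounds the bracket in \eqref{eq:rent_general} by $2\varepsilon$. Since $2\varepsilon\ge0$, applying $[\cdot]^+$ preserves the bound, which gives \eqref{eq:rentbound}. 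If $\varepsilon=\infty$ the bound is vacuous, so nothing needs checking in that case. Letting $\widehat U^{(i)}_h\to U^{(i)}_h$ uniformly then forces the rent to $0$, refining Proposition~\ref{prop:rent_general}.

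For monotonicity, if $\mathcal G\subseteq\mathcal G'$ then a supremum over the larger set is at least the supremum over the smaller one, so $\overline{\mathrm{Rent}}_{h,i}(\mathcal G)\le\overline{\mathrm{Rent}}_{h,i}(\mathcal G')$. At $\mathcal G=\{P_h\}$ the only model is the truth. Reading the perceived continuation value as the one induced by that model, so that $\hat W_{h+1,i}=W_{h+1,i}$, Proposition~\ref{prop:rent_general} gives rent zero. Equivalently, $\varepsilon=0$ in \eqref{eq:rentbound}.

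The only delicate step is the convention that $\hat W_{h+1,i}$ is determined by $\hat P^{(i)}_h$, or equals $W_{h+1,i}$ when the model is true. Without it, "$\mathcal G=\{P_h\}$" would not pin down $\widehat U^{(i)}_h$. I will state this convention explicitly, consistent with the hypothesis of Proposition~\ref{prop:rent_general}. The "Lipschitz" phrasing also needs a remark: the map $\widehat U\mapsto \mathrm{Rent}$ is $2$-Lipschitz at $U^{(i)}_h$, which is exactly what \eqref{eq:rentbound} says.
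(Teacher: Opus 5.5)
Your proof is correct and is essentially the paper's argument. The paper bounds $\sup_{a_i'}\widehat U^{(i)}_h(a_i')$ above by $U^{(i)}_h(a_i^\star)+\eta$ and $\widehat U^{(i)}_h(a_i^\star)$ below by $U^{(i)}_h(a_i^\star)-\eta$, which is your three-term split with the supremum taken first, and it handles monotonicity and the case $\mathcal{G}=\{P_h\}$ exactly as you do, via a supremum over a larger set and Proposition~\ref{prop:rent_general}; your explicit convention that $\hat W_{h+1,i}$ is pinned down by the model is a useful clarification the paper leaves implicit.
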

\begin{proof}
True-model obedience gives $\sup_{a_i'}U^{(i)}_h(a_i')=U^{(i)}_h(a_i^\star)$. Writing $\eta:=\|\widehat U^{(i)}_h-U^{(i)}_h\|_\infty$, $\sup_{a_i'}\widehat U^{(i)}_h(a_i')\le\sup_{a_i'}U^{(i)}_h(a_i')+\eta=U^{(i)}_h(a_i^\star)+\eta$ and $\widehat U^{(i)}_h(a_i^\star)\ge U^{(i)}_h(a_i^\star)-\eta$, so $\mathrm{Rent}_{h,i}=\big[\sup_{a_i'}\widehat U^{(i)}_h(a_i')-\widehat U^{(i)}_h(a_i^\star)\big]^+\le 2\eta$, which is \eqref{eq:rentbound}. Monotonicity of $\overline{\mathrm{Rent}}_{h,i}$ is a supremum over a larger set, and $\overline{\mathrm{Rent}}_{h,i}(\{P_h\})=0$ by Proposition~\ref{prop:rent_general}.
\end{proof}

Proposition~\ref{prop:rent_general} states the qualitative content in full generality: knowledge of the dynamics is a source of rents, and the rent is zero exactly at the known-dynamics benchmark. Proposition~\ref{prop:rentbound} adds that its size is controlled by the agent's misspecification and monotone in the class of models the designer must deter, the general form of the robustness margin made precise in the LQG case (Proposition~\ref{prop:robust}). What general payoffs do \emph{not} give is a tractable \emph{expression} for the rent. In the LQG class the perceived objective is concave quadratic, the model gap is finite-dimensional, and the rent collapses to a closed form quadratic in the agent's model error, to which we now turn.

\section{Knowing the Dynamics: the LQG Case}\label{sec:rentlqg}
We now specialise to the LQG class of Section~\ref{sec:lqg}, where agent $i$'s model of the transition is a triple $(\hat A^{(i)}_h,\hat B^{(i)}_h,\hat C^{(i)}_h)$ and the rent of \eqref{eq:rent_general} acquires a closed form.

With $s_h$ observed, the conditioning in the LQG obedience condition changes: $s_i$ and $A_h s_h$ are constants, so the term $\cov(a_i,s_i)$ of Theorem~\ref{thm:lqg} is absorbed into the (known) intercept and the true-model obedience condition becomes, for each $i$,
\begin{equation}\label{eq:partII_true}
\sum_{j\in\Ncal}\widetilde\Phi_{h,ij}\,\cov(a_i,a_j)\ =\ \cov(a_i,\gamma_i)\ +\ \delta\,(B_h^{(i)})^\top\Pi_{h+1,i}\,C_h\,\cov(a_i,\gamma),
\end{equation}
covariances now taken conditional on $s_h$. Equation \eqref{eq:partII_true} is the benchmark the designer commits to. It is not a new condition but the dynamic obedience condition of Theorem~\ref{thm:lqg} evaluated under a finer information set: conditioning \eqref{eq:lqg_dynobd} on an observed $s_h$ makes $s_i$ and $A_h s_h$ measurable, so $\cov(a_i,s_i)\to 0$ and the $A_h\cov(a_i,s)$ term drops, leaving exactly \eqref{eq:partII_true}.

Agent $i$'s model is the triple $(\hat A^{(i)}_h,\hat B^{(i)}_h,\hat C^{(i)}_h)$, which induces, through the recursion of Lemma~\ref{lem:quadratic}, a perceived continuation value $\hat W_{h+1,i}$ with quadratic coefficient $\widehat\Pi_{h+1,i}$ and linear coefficient $\widehat\pi_{h+1,i}$. The agent best-responds under its own model. We collect the discrepancy in the \emph{model gap}
\begin{equation}\label{eq:gap}
G_{h,i}\ :=\ \big(\hat A^{(i)}_h-A_h,\ \hat B^{(i)}_h-B_h,\ \hat C^{(i)}_h-C_h,\ \widehat\Pi_{h+1,i}-\Pi_{h+1,i},\ \widehat\pi_{h+1,i}-\pi_{h+1,i}\big),
\end{equation}
the finite-dimensional realisation of the general gap of Proposition~\ref{prop:rent_general}, zero exactly when the agent's model agrees with the truth.

The designer commits to a policy satisfying the true-model condition \eqref{eq:partII_true}, so under the true model every agent's first-order gain from deviating at the recommended action is zero (Theorem~\ref{thm:lqg}). An agent evaluating the same recommendation under its own model perceives a residual. Writing the continuation gradient at the recommended action under a model as $g_{h,i}=2(B_h^{(i)})^\top\Pi_{h+1,i}\bar s'+(B_h^{(i)})^\top\pi_{h+1,i}$ (true) and $\hat g_{h,i}$ (perceived, hats throughout), the agent's perceived first-order gain at the recommended $a_i^\star$ is
\begin{equation}\label{eq:residual}
r_{h,i}\ :=\ \delta\,\E\big[\hat g_{h,i}-g_{h,i}\,\big|\,a_i^\star,s_h\big],
\end{equation}
which is linear in the model gap $G_{h,i}$ and vanishes when $G_{h,i}=0$.

\begin{proposition}[Rent from superior knowledge of the dynamics]\label{prop:rent}
Let $\widehat{\Phi}^{\mathrm{eff}}_{h,ii}:=\Phi_{ii}-\delta(\hat B^{(i)}_h)^\top\widehat\Pi_{h+1,i}\hat B^{(i)}_h>0$ (perceived own-curvature). Then agent $i$'s perceived-optimal deviation from the recommendation is
\begin{equation}\label{eq:dev}
\hat a_i-a_i^\star\ =\ \frac{r_{h,i}}{2\,\widehat{\Phi}^{\mathrm{eff}}_{h,ii}},
\end{equation}
and the per-stage rent it perceives from this deviation is
\begin{equation}\label{eq:rent}
\mathrm{Rent}_{h,i}\ =\ \frac{r_{h,i}^2}{4\,\widehat{\Phi}^{\mathrm{eff}}_{h,ii}}\ \ge\ 0 .
\end{equation}
The rent is quadratic in the residual $r_{h,i}$, hence quadratic in the model gap $G_{h,i}$, and equals zero if and only if $r_{h,i}=0$; in particular it is zero at the known-dynamics benchmark of Part~I. It is the closed form, in the LQG class, of the general rent \eqref{eq:rent_general}.
\end{proposition}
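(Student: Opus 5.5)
The plan is to write the perceived objective \eqref{eq:perceived_obj} explicitly as a function of the agent's own action, show it is a concave quadratic in $a_i'$ whose curvature is $\widehat{\Phi}^{\mathrm{eff}}_{h,ii}$ and whose slope at $a_i^\star$ is exactly $r_{h,i}$, and then maximise the quadratic in closed form. Fix $h$, the observed $s_h=s$, and the recommendation $a_i^\star$. Conditional on $(a_i^\star,s)$, the others' recommendations $a_{-i}$ and $\gamma$ have a fixed Gaussian law that does not depend on the agent's own choice, because deviations are unobserved within the stage. Substituting the payoff \eqref{utility} and the agent's perceived continuation $\hat W_{h+1,i}(s')=s'^\top\widehat\Pi_{h+1,i}s'+\widehat\pi_{h+1,i}^\top s'+\hat c_{h+1,i}$, with $s'=\hat A^{(i)}_h s+\hat B^{(i)}_h a+\hat C^{(i)}_h\gamma+\varepsilon$, gives $\widehat U^{(i)}_h(a_i')$ as a polynomial of degree two in $a_i'$. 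The $a_i'^2$ coefficient comes from $-\Phi_{ii}a_i'^2$ and from $\delta\,a_i'^2(\hat B^{(i)}_h)^\top\widehat\Pi_{h+1,i}\hat B^{(i)}_h$, that is, $-\widehat{\Phi}^{\mathrm{eff}}_{h,ii}$, and the noise only contributes a constant. Hence
\[
\widehat U^{(i)}_h(a_i')=\widehat U^{(i)}_h(a_i^\star)+\widehat U^{(i)\prime}_h(a_i^\star)\,(a_i'-a_i^\star)-\widehat{\Phi}^{\mathrm{eff}}_{h,ii}\,(a_i'-a_i^\star)^2 ,
\]
which is exact because $\widehat U^{(i)}_h$ is quadratic.

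The next step identifies the slope. Differentiating as in \eqref{eq:foc}, the perceived derivative at $a_i^\star$ is the conditional expectation of $-2\sum_j\Phi_{ij}a_j+2(\gamma_i+s_i)+\delta\hat g_{h,i}$. The true derivative is the same expression with $g_{h,i}$ in place of $\hat g_{h,i}$. The stage-payoff part is common to both, since agents know their own payoff (Assumption~\ref{ass:partII}). The designer's policy satisfies the true-model condition \eqref{eq:partII_true}, which under joint normality is equivalent, by the converse direction of the proof of Theorem~\ref{thm:lqg}, to the conditional first-order condition \eqref{eq:foc2} holding given $a_i^\star$ and $s_h$. Therefore the true derivative vanishes, and subtracting it gives $\widehat U^{(i)\prime}_h(a_i^\star)=\delta\E[\hat g_{h,i}-g_{h,i}\mid a_i^\star,s_h]=r_{h,i}$. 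One point needs care here: the constant intercept (the $\pi$ term and $A_hs$) was dropped in the covariance form, so I will use the conditional-mean form \eqref{eq:foc2} rather than \eqref{eq:partII_true}. The true continuation gradient $g_{h,i}$ must also be the one evaluated at $\bar s'$ under the true model, consistent with the definition preceding \eqref{eq:residual}.

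With slope $r_{h,i}$ and curvature $-\widehat{\Phi}^{\mathrm{eff}}_{h,ii}<0$, the maximiser is $\hat a_i=a_i^\star+r_{h,i}/(2\widehat{\Phi}^{\mathrm{eff}}_{h,ii})$, which is \eqref{eq:dev}. The maximal gain is $r_{h,i}^2/(4\widehat{\Phi}^{\mathrm{eff}}_{h,ii})\ge0$, so the $[\cdot]^+$ in \eqref{eq:rent_general} is inactive and \eqref{eq:rent} follows. The rent vanishes iff $r_{h,i}=0$. The residual $r_{h,i}$ is a difference of gradients that depends smoothly on $G_{h,i}$ and vanishes at $G_{h,i}=0$; it is linear in $G_{h,i}$ to first order, as asserted before the statement, so the rent is quadratic in the gap. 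At the Part~I benchmark $G_{h,i}=0$, so $r_{h,i}=0$, in agreement with Proposition~\ref{prop:rent_general}.

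The main obstacle is the slope identification. It has to be checked that the perceived gradient $\hat g_{h,i}$ is evaluated at the perceived mean next state, and that all stage-payoff terms cancel exactly against the true first-order condition. Otherwise extra terms that are not of the form $\hat g-g$ would appear in the slope.
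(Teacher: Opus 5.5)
Your proposal is correct and follows the paper's proof. The perceived objective is a concave quadratic in the own action with $a_i'^2$-coefficient $-\widehat{\Phi}^{\mathrm{eff}}_{h,ii}$; its slope at $a_i^\star$ is the vanishing true derivative plus $r_{h,i}$; completing the square then gives \eqref{eq:dev} and \eqref{eq:rent}. Your two extra cautions are sharper than the paper's one-line appeals and leave the argument unchanged: you invoke the conditional first-order condition \eqref{eq:foc2} rather than the intercept-free covariance form \eqref{eq:partII_true}, and you read ``linear in $G_{h,i}$'' only to first order, since $\hat g_{h,i}-g_{h,i}$ contains products of gap components.
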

\begin{proof}
Under Assumption~\ref{ass:partII} the agent's perceived stage-plus-continuation objective is concave quadratic in its own action with curvature $2\widehat{\Phi}^{\mathrm{eff}}_{h,ii}>0$; its derivative at the recommended action $a_i^\star$ equals the true derivative (zero, by \eqref{eq:partII_true}) plus the model-induced residual \eqref{eq:residual}, i.e.\ $r_{h,i}$. For a concave quadratic with curvature $\kappa:=2\widehat{\Phi}^{\mathrm{eff}}_{h,ii}$ and derivative $r_{h,i}$ at $a_i^\star$, the maximiser is $a_i^\star+r_{h,i}/\kappa$, giving \eqref{eq:dev}, and the gain over $a_i^\star$ is $r_{h,i}^2/(2\kappa)=r_{h,i}^2/(4\widehat{\Phi}^{\mathrm{eff}}_{h,ii})$, which is \eqref{eq:rent}. The residual is linear in $G_{h,i}$ by \eqref{eq:residual} and \eqref{eq:gap}, so the rent is quadratic in $G_{h,i}$ and vanishes iff $r_{h,i}=0$.
\end{proof}

Proposition~\ref{prop:rent} isolates the channel: the rent lives entirely in the continuation term that the controlled transition created in Part~I. An agent that knows the dynamics better than the designer's committed policy accounts for converts that knowledge into a profitable, undeterred deviation; an agent whose model matches the truth has no such advantage. The designer can restore deterrence only by over-satisfying obedience.

\begin{proposition}[Robust obedience]\label{prop:robust}
Suppose the designer wishes to deter all agents whose model gap lies in a set $\mathcal{G}$ with $\sup_{G_{h,i}\in\mathcal{G}}|r_{h,i}|\le \bar r_{h,i}(\mathcal{G})$. Then it suffices for the designer to commit to a policy that satisfies the true-model first-order condition with a margin: the recommended action $a_i^\star$ is chosen so that the true continuation-adjusted marginal payoff is non-positive over the deviation directions consistent with $\mathcal{G}$, which tightens \eqref{eq:partII_true} to an inequality with slack of order $\bar r_{h,i}(\mathcal{G})$. The slack is monotone in the size of $\mathcal{G}$ and shrinks to zero as $\mathcal{G}\to\{0\}$, recovering \eqref{eq:partII_true}.
\end{proposition}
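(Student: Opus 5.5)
The plan is to work directly with the perceived first-order condition of Proposition~\ref{prop:rent} and to show that adding slack to the true-model condition \eqref{eq:partII_true} cancels the perceived residual $r_{h,i}$ in the deviation directions that matter. Fix a stage $h$, an observed $s_h$, and an agent $i$. By the proof of Proposition~\ref{prop:rent}, agent $i$'s perceived objective is concave quadratic in its own action, with curvature $2\widehat{\Phi}^{\mathrm{eff}}_{h,ii}>0$. Its derivative at $a_i^\star$, conditional on $(a_i^\star,s_h)$, decomposes as
\[
\widehat D_i \;=\; D_i+r_{h,i},
\]
where $D_i$ is the true continuation-adjusted marginal payoff, namely the left side of \eqref{eq:foc2} moved to the right, conditioned on $s_h$. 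Concavity makes the perceived-optimal action $a_i^\star+\widehat D_i/(2\widehat{\Phi}^{\mathrm{eff}}_{h,ii})$. The perceived gain over $a_i^\star$ is zero iff $\widehat D_i=0$ in the unconstrained case. If the relevant deviation directions are one-signed, for example a constrained or monotone action set, the condition becomes $\widehat D_i\cdot d\le 0$ for each admissible direction $d$.

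First I would replace the equality $D_i=0$ by a margin requirement. The designer chooses $a_i^\star$ so that $D_i\cdot d\le -\bar r_{h,i}(\mathcal{G})$ along every deviation direction $d$ consistent with $\mathcal{G}$. Equivalently, $|D_i+r|$ has the sign that makes deviation unprofitable for all $|r|\le\bar r_{h,i}$. Since $r_{h,i}$ is linear in $G_{h,i}$ by \eqref{eq:residual}, the bound $\sup_{\mathcal{G}}|r_{h,i}|\le\bar r_{h,i}$ gives $\widehat D_i\cdot d\le D_i\cdot d+\bar r_{h,i}\le 0$ for every model in $\mathcal{G}$. The perceived objective is concave and is non-increasing along $d$ from $a_i^\star$, so no perceived deviation is profitable, and Rent$_{h,i}=0$ by \eqref{eq:rent_general}.

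Second, I would translate the margin into the covariance form. I would multiply the slackened conditional first-order inequality by the centred recommendation, exactly as in the proof of Theorem~\ref{thm:lqg}. This turns \eqref{eq:partII_true} into an inequality whose slack is $\bar r_{h,i}$ times a factor of order $\sqrt{\var(a_i\mid s_h)}$, that is, of order $\bar r_{h,i}(\mathcal{G})$.

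Monotonicity follows because $\mathcal{G}\subseteq\mathcal{G}'$ implies $\bar r_{h,i}(\mathcal{G})\le\bar r_{h,i}(\mathcal{G}')$. For the limit $\mathcal{G}\to\{0\}$, linearity of $r$ in the gap gives $\bar r\to0$, so the slack vanishes and \eqref{eq:partII_true} is recovered, consistent with Proposition~\ref{prop:rentbound}.

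The main obstacle is the interpretation of ``deviation directions consistent with $\mathcal{G}$''. With an unconstrained real action and $r$ ranging over a symmetric interval, no single $a_i^\star$ can make the two-sided derivative vanish for every $r$. The statement only asserts sufficiency of a one-sided margin over the admitted directions, so I would restrict to those directions and handle a two-sided set by using the sign of $r$ determined by $\mathcal{G}$. If $\mathcal{G}$ is genuinely two-sided, I would instead aim for the weaker conclusion that the rent is at most $\bar r^2/(4\widehat\Phi^{\mathrm{eff}}_{h,ii})$ by \eqref{eq:rent}, which the margin drives to zero as $\mathcal{G}$ shrinks.
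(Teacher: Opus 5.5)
Your first step and your monotonicity/limit argument are the paper's proof. You replace $m(a_i^\star)=0$ (your $D_i=0$) by $d\,D_i\le-\bar r_{h,i}(\mathcal{G})$ on each deterred direction, so that $d\,(D_i+r_{h,i})\le-\bar r_{h,i}+|r_{h,i}|\le0$. You then use concavity of the perceived quadratic and the fact that $\bar r_{h,i}(\mathcal{G})=\sup_{\mathcal{G}}|r_{h,i}|$ is non-decreasing with $\bar r_{h,i}(\{0\})=0$. Your two-sided caveat is correct, and the paper's proof passes over it. Note that it is the admissible action set, not $\mathcal{G}$, that must be one-signed. A margin in direction $d$ gives $-d\,(D_i+r_{h,i})\ge\bar r_{h,i}-|r_{h,i}|\ge0$, so the opposite deviation becomes weakly profitable for every model, even when $\mathcal{G}$ is one-sided.

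The step that fails is your translation to covariance form. Multiplying $d\,D_i(a_i^\star)\le-\bar r_{h,i}$ by the centred recommendation $a_i^\star-\E[a_i\mid s_h]$ does not preserve the inequality, because that weight changes sign. In fact the conclusion goes the other way. Under joint normality $D_i(a_i^\star)=\alpha+\beta\,(a_i^\star-\E[a_i\mid s_h])$ is affine, and $\E\big[(a_i-\E[a_i\mid s_h])\,D_i(a_i)\mid s_h\big]=\beta\,\var(a_i\mid s_h)$. Hence \eqref{eq:partII_true} is exactly $\beta=0$ when $\var(a_i\mid s_h)>0$. A bound $d\,D_i\le-\bar r_{h,i}$ at every recommendation in a nondegenerate Gaussian support forces $\beta=0$ and $d\,\alpha\le-\bar r_{h,i}$. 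So the covariance identity still holds with equality, and all the slack sits in the intercept. Centring annihilates that intercept, exactly as the constant $\tfrac{\delta}{2}(B_h^{(i)})^\top\pi_{h+1,i}$ drops out in the proof of Theorem~\ref{thm:lqg}. State the margin on the conditional marginal itself (equivalently, on the mean condition), as the paper's proof does when it reads \eqref{eq:partII_true} as $m(a_i^\star)=0$, and drop the $\sqrt{\var(a_i\mid s_h)}$ claim.

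Likewise, in your fallback, $\bar r_{h,i}^2/(4\widehat\Phi^{\mathrm{eff}}_{h,ii})$ is the no-margin bound of Proposition~\ref{prop:rent}, valid at $D_i=0$. It is $\bar r_{h,i}\to0$, not the margin, that drives it to zero. With a margin of size $\bar r_{h,i}$ imposed, the undeterred direction's gain can reach $\bar r_{h,i}^2/\widehat\Phi^{\mathrm{eff}}_{h,ii}$.
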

\begin{proof}
Fix the stage $h$ and agent $i$ and suppress both indices. Let $m(a_i^\star)$ be the true continuation-adjusted marginal payoff of the agent's stage-plus-continuation objective at the recommended action; the true-model condition \eqref{eq:partII_true} is $m(a_i^\star)=0$. An agent with model gap $G_{h,i}\in\mathcal{G}$ perceives instead the marginal $m(a_i^\star)+r_{h,i}$, with $r_{h,i}$ the residual \eqref{eq:residual}; by Proposition~\ref{prop:rent} its perceived-optimal deviation is $r_{h,i}/(2\widehat\Phi^{\mathrm{eff}}_{h,ii})$, taken in the direction $d=\operatorname{sign}(r_{h,i})$, with gain $r_{h,i}^2/(4\widehat\Phi^{\mathrm{eff}}_{h,ii})$ increasing in $|r_{h,i}|$.

To deter the deviation in a direction $d\in\{+,-\}$ that some model in $\mathcal{G}$ would take, it suffices that the perceived directional marginal be non-positive there for every such model, i.e.\ $d\,\big(m(a_i^\star)+r_{h,i}\big)\le 0$ for all $G_{h,i}\in\mathcal{G}$ with $\operatorname{sign}(r_{h,i})=d$. Since $|r_{h,i}|\le\bar r_{h,i}(\mathcal{G})$ by hypothesis, it is enough that $d\,m(a_i^\star)\le-\bar r_{h,i}(\mathcal{G})$: then $d\,(m(a_i^\star)+r_{h,i})\le -\bar r_{h,i}(\mathcal{G})+|r_{h,i}|\le 0$. This replaces the equality \eqref{eq:partII_true}, $m(a_i^\star)=0$, by a condition in which the true marginal carries a slack of magnitude $\bar r_{h,i}(\mathcal{G})$ on each deterred direction; equivalently, committing with this slack bounds the worst-case undeterred gain over $\mathcal{G}$ by $\bar r_{h,i}(\mathcal{G})^2/(4\widehat\Phi^{\mathrm{eff}}_{h,ii})$ (Proposition~\ref{prop:rent}).

Finally $\bar r_{h,i}(\mathcal{G})=\sup_{G_{h,i}\in\mathcal{G}}|r_{h,i}|$ is non-decreasing in $\mathcal{G}$ (a supremum over a larger set) and equals $0$ when $\mathcal{G}=\{0\}$, since the residual \eqref{eq:residual} vanishes at a correct model. Hence the required slack is monotone in the size of $\mathcal{G}$ and shrinks to zero as $\mathcal{G}\to\{0\}$, recovering \eqref{eq:partII_true}.
\end{proof}

We now show that the $\tilde O(\log T)$ rate is a theorem under Assumption~\ref{ass:learning}, which comprises one regularity condition, (L1), and three conditions, (L2)--(L4), each assuming away one coupling of the fully general problem; we then state the unconditional version as the open conjecture.

\begin{assumption}[Stationary learning environment]\label{ass:learning}
\leavevmode
\begin{enumerate}
\item[(L1)] \emph{(Stationary, contractive primitives.)} The primitives are time-homogeneous and satisfy the contraction condition $\delta\|A\|^2+\delta L<1$ of Proposition~\ref{prop:stationary}, so the Riccati map $\theta:=(A,B,C)\mapsto\Pi(\theta)$ of \eqref{eq:are} is well defined and, by the implicit-function theorem, locally Lipschitz with constant $L_\Pi$ on a neighbourhood of the true $\theta$.
\item[(L2)] \emph{(On-path estimation.)} Agents follow recommendations on path and estimate $\theta$ from the realised obedient transitions $(s_k,\gamma_k,a_k,s_{k+1})_{k\le t}$ by ridge least squares; the rent of Proposition~\ref{prop:rent} is the counterfactual one-shot deviation gain and is not realised, so the estimation data is independent of any agent's deviation.
\item[(L3)] \emph{(Exogenous persistent excitation.)} The designer's stationary policy injects exogenous excitation so that the regressor information matrix $\Lambda_t=\lambda I+\sum_{k\le t}z_kz_k^\top$, $z_k:=(s_k,\gamma_k,a_k)$, satisfies $\lambda_{\min}(\Lambda_t)\ge\lambda_0 t$ for all $t\ge t_0$, some $\lambda_0,t_0>0$.
\item[(L4)] \emph{(Common environment; heterogeneous estimators; sub-Gaussian noise.)} All agents estimate the same transition $\theta=(A,B,C)$ from the common observed obedient transitions and treat it as fixed (they do not form beliefs about one another's estimates), but may use different ridge regularizers $\lambda^{(i)}>0$ and prior means, so the estimators are heterogeneous across agents. The regressors are bounded ($\|z_k\|\le z_{\max}$, from stability and bounded controls), and the transition noise $\varepsilon_k$ is mean-zero, i.i.d., $\sigma_\varepsilon$-sub-Gaussian.
\end{enumerate}
\end{assumption}

\begin{remark}[Homogeneous learners as a special case]
(L4) nests the homogeneous-learner setting: taking a common regularizer $\lambda^{(i)}\equiv\lambda$ and a common prior mean across agents makes the estimators identical on the common data, the per-agent constants $C_1^{(i)}$ in Theorem~\ref{thm:regret} coincide, and a single representative agent's estimation bound covers all agents. Heterogeneity thus costs only the constant $\max_i C_1^{(i)}$, never the rate.
\end{remark}

\begin{theorem}[Logarithmic rent under exogenous excitation]\label{thm:regret}
Under Assumption~\ref{ass:learning}, there is a constant $c$ (depending on $p,\lambda_0,L_\Pi,\sigma_\varepsilon,z_{\max}$, the rent constants of Proposition~\ref{prop:rent}, and the agents' regularizers and priors) such that, with probability at least $1-\delta'$, for all $t\ge t_0$ \emph{every agent's} per-stage rent satisfies
\begin{equation}\label{eq:rent_rate}
\mathrm{Rent}_{t}\ \le\ c\,\frac{\log(t/\delta')}{t},\quad\text{and hence}\quad \sum_{t=t_0}^{T}\mathrm{Rent}_{t}\ \le\ c'\,\log^2(T/\delta')\ =\ \tilde O(\log T);
\end{equation}
summing over the finite set of agents preserves the $\tilde O(\log T)$ rate.
\end{theorem}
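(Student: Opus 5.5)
The argument chains three bounds: a self-normalised least-squares confidence bound on each agent's estimate of $\theta$, a Lipschitz transfer from the estimation error to the residual $r_{t,i}$ of \eqref{eq:residual}, and the quadratic rent formula of Proposition~\ref{prop:rent}. Summing the resulting per-stage bound over $t$ then gives the cumulative rate.

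\emph{Step 1 (estimation).} By (L2), every agent's data are the common obedient transitions $(z_k,s_{k+1})$, and these do not depend on any agent's counterfactual deviation. The regression $s_{k+1}=\theta z_k+\varepsilon_k$ is therefore a standard linear regression with an adapted design. For agent $i$, with regulariser $\lambda^{(i)}$ and prior mean $\theta_0^{(i)}$, I would apply the self-normalised martingale bound of \cite{abbasi2011improved} under the sub-Gaussian noise of (L4). This gives, with probability $1-\delta'/n$ and simultaneously for all $t$,
\[
\big\|(\hat\theta^{(i)}_t-\theta)\Lambda^{(i)\,1/2}_t\big\|\ \le\ \sigma_\varepsilon\sqrt{2\log(n/\delta')+p\log\big(1+t z_{\max}^2/\lambda^{(i)}\big)}+\sqrt{\lambda^{(i)}}\,\|\theta-\theta_0^{(i)}\|=:\beta^{(i)}_t .
\]
Each agent's design matrix $\Lambda^{(i)}_t$ differs from $\Lambda_t$ only through the ridge term, so (L3) gives $\lambda_{\min}(\Lambda^{(i)}_t)\ge\lambda_0 t$ for $t\ge t_0$. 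Hence $\|\hat\theta^{(i)}_t-\theta\|\le\beta^{(i)}_t/\sqrt{\lambda_0 t}$, and $(\beta^{(i)}_t)^2=O(\log(t/\delta'))$. A union bound over the $n$ agents makes the statement hold for every agent at once.

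\emph{Step 2 (from model error to residual).} By (L1), the perceived continuation coefficients $\widehat\Pi,\widehat\pi$ are the stationary Riccati solutions at $\hat\theta^{(i)}_t$. They therefore satisfy $\|\widehat\Pi-\Pi\|\le L_\Pi\|\hat\theta^{(i)}_t-\theta\|$, and similarly for $\widehat\pi$ (I would fold the linear coefficient into the same implicit-function argument). This requires $\hat\theta^{(i)}_t$ to lie in the Lipschitz neighbourhood of $\theta$. Step~1 guarantees this once $t$ exceeds a threshold, which I absorb into $t_0$ by enlarging it, or equivalently into $c$. The model gap \eqref{eq:gap} then satisfies $\|G_{t,i}\|\le(1+2L_\Pi)\|\hat\theta^{(i)}_t-\theta\|$. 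Because $r_{t,i}$ is linear in $G_{t,i}$, with coefficients bounded in terms of $z_{\max}$, $\|\Pi\|$ and $\|B\|$, it follows that $|r_{t,i}|\le K\|G_{t,i}\|$. The same neighbourhood argument keeps the perceived curvature $\widehat\Phi^{\mathrm{eff}}_{ii}$ bounded below by half of its true value, which is positive.

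\emph{Step 3 (per-stage rent and summation).} Proposition~\ref{prop:rent} gives $\mathrm{Rent}_{t,i}=r_{t,i}^2/(4\widehat\Phi^{\mathrm{eff}}_{ii})\le c\,\log(t/\delta')/t$, with $c$ collecting $K$, $L_\Pi$, $\lambda_0$, $\sigma_\varepsilon$, $p$ and $\max_i$ of the prior and regulariser constants. This is the first claim of \eqref{eq:rent_rate}. For the second, $\sum_{t=t_0}^T\log(t/\delta')/t\le\log(T/\delta')\sum_{t\le T}1/t\le\log(T/\delta')(1+\log T)\le c''\log^2(T/\delta')$. Summing over the $n$ agents multiplies the bound by $n$, so the $\tilde O(\log T)$ rate is preserved.

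The main obstacle is Step~2, specifically justifying the Lipschitz dependence of the Riccati fixed point on $\theta$ uniformly in a neighbourhood. My first attempt would use the contraction in Proposition~\ref{prop:stationary}. Two contractions with modulus $q<1$ whose maps differ by at most $\ell\|\hat\theta-\theta\|$ have fixed points within $\ell\|\hat\theta-\theta\|/(1-q)$ of each other. This reduces the claim to Lipschitz continuity of the one-step map in $\theta$ on a bounded set, which holds because that map is polynomial in $(A,B,C)$. A secondary check is that the neighbourhood threshold is also reached with probability $1-\delta'$ on the same event as Step~1.
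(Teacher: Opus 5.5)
Your proposal is correct and follows the paper's proof essentially step for step: the self-normalised ridge bound of \cite{abbasi2011improved} combined with (L3) and a union bound over agents, the Riccati--Lipschitz transfer supplied by (L1), the quadratic rent formula of Proposition~\ref{prop:rent}, and a final summation over $t$. Your additions only tighten points the paper leaves implicit. First, you derive the Lipschitz constant from the contraction of Proposition~\ref{prop:stationary}; this is valid, though the one-step map depends on $\theta$ through the stationary moments $\Xi$ and is therefore smooth rather than polynomial. Second, you keep $\widehat\Phi^{\mathrm{eff}}_{ii}$ bounded away from zero. Third, you bound the sum by $\log(T/\delta')\sum_{t\le T}1/t$, which avoids the $t=t_0$ term that the paper's integral comparison omits.
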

\begin{proof}
Write the transition as the linear regression $s_{k+1}=\theta z_k+\varepsilon_k$ with $\theta:=(A,B,C)$ and $p:=\dim z_k$ the regressor dimension, and write $G_t^{(i)}:=\hat\theta_t^{(i)}-\theta$ for the estimated-parameter component of agent $i$'s model gap \eqref{eq:gap}. \\\emph{Step 1 (estimation).} The self-normalised least-squares bound \cite{abbasi2011improved} applies to each agent's ridge estimator separately. Agent $i$'s information matrix $\Lambda_t^{(i)}=\lambda^{(i)}I+\sum_{k\le t}z_kz_k^\top$ shares the common regressors $z_k$, differing across agents only through the regularizer $\lambda^{(i)}$; under (L2),(L4) there is, with probability $\ge1-\delta'$, simultaneously for all $t$, a confidence width $\beta_t^{(i)}=\sigma_\varepsilon\sqrt{2\log(\det(\Lambda_t^{(i)})^{1/2}\det(\lambda^{(i)} I)^{-1/2}/\delta')}+(\lambda^{(i)})^{1/2}\|\theta-\theta_0^{(i)}\|$ (the second term, the prior-induced bias, is a constant) with $\|\hat\theta_t^{(i)}-\theta\|_{\Lambda_t^{(i)}}\le\beta_t^{(i)}$. Bounded regressors give $\det\Lambda_t^{(i)}=O(t^{p})$, so $\beta_t^{(i)}=O(\sqrt{p\log(t/\delta')})$, and by (L3) the common data satisfies $\lambda_{\min}(\Lambda_t^{(i)})\ge\lambda_0 t$, whence
\[
\|G_t^{(i)}\|\ :=\ \|\hat\theta_t^{(i)}-\theta\|_2\ \le\ \frac{\beta_t^{(i)}}{\sqrt{\lambda_{\min}(\Lambda_t^{(i)})}}\ \le\ \frac{\beta_t^{(i)}}{\sqrt{\lambda_0 t}}\ \le\ C_1^{(i)}\sqrt{\frac{p\log(t/\delta')}{\lambda_0\,t}} ,
\]
with $C_1^{(i)}$ depending on $\lambda^{(i)}$ and the prior bias. As there are finitely many agents, $C_1:=\max_i C_1^{(i)}$ is finite and the bound holds uniformly in $i$; a union bound over the finitely many agents replaces $\delta'$ by $\delta'/n$, which only affects constants.
\\\emph{Step 2 (Riccati Lipschitz).} By (L1), for $t$ large enough that $\|G_t^{(i)}\|$ lies in the Lipschitz neighbourhood, $\|\widehat\Pi_t^{(i)}-\Pi\|\le L_\Pi\|G_t^{(i)}\|$. \\\emph{Step 3 (rent).} By Proposition~\ref{prop:rent} agent $i$'s residual $r_t^{(i)}$ is linear in the continuation-gradient gap, $|r_t^{(i)}|\le C_2 L_\Pi\|G_t^{(i)}\|$, and $\mathrm{Rent}_t^{(i)}=(r_t^{(i)})^2/(4\widehat\Phi^{\mathrm{eff}})\le C_3\|G_t^{(i)}\|^2$. Combining with Step 1 and the uniform constant $C_1=\max_i C_1^{(i)}$, every agent's rent obeys $\mathrm{Rent}_t\le C_3C_1^2\,\tfrac{p}{\lambda_0}\,\tfrac{\log(t/\delta')}{t}=:c\,\tfrac{\log(t/\delta')}{t}$.\\ \emph{Step 4 (sum).} $\sum_{t=t_0}^{T}c\,\tfrac{\log(t/\delta')}{t}\le c\!\int_{t_0}^{T}\tfrac{\log(s/\delta')}{s}\,ds=\tfrac{c}{2}\big[\log^2(T/\delta')-\log^2(t_0/\delta')\big]=O(\log^2(T/\delta'))$; since the agent set is finite, summing this bound over agents preserves the $\tilde O(\log T)$ rate.
\end{proof}

Of Assumption~\ref{ass:learning}, (L1) is pure regularity: it makes the stationary Riccati map well defined and locally Lipschitz. Assumptions (L2)--(L4) are exactly the three couplings that a fully general treatment must instead control: (L2) replaces the \emph{self-perturbed} estimation data of a deviating agent with on-path data and a counterfactual rent; (L3) replaces the \emph{endogenous} excitation the designer's own policy generates with an exogenous lower bound; and (L4), while allowing heterogeneous estimators, holds the environment common and exogenously fixed, ruling out the \emph{mutual} learning in which each agent must also track the others' evolving models, the entanglement whose asymmetric, co-evolving gaps make obedience bind unevenly and turn the design into a moving target. Removing (L2)--(L4) yields the open problem.

\begin{conjecture}[Unconditional logarithmic rent]\label{conj:regret}
The cumulative-rent bound of Theorem~\ref{thm:regret}, $\sum_{t\le T}\mathrm{Rent}_t=\tilde O(\log T)$, continues to hold when (L2)--(L4) are dropped: when the excitation is endogenous to the designer's policy, agents learn from their own (possibly off-path) data, and agents' learning is mutually entangled (each tracking the others' evolving models), so that the policy, the estimators, and the obedience constraints co-evolve as an informational arms race.
\end{conjecture}

\textbf{A design tension.} Persistent excitation (L3) is double-edged: the recommendation variance that keeps obedience comfortably satisfied is also what feeds the agents' estimators, accelerating their learning and the rent. A designer who wished to slow learning would deliberately under-excite, trading present obedience slack for a slower-growing rent. This is the discrete-time, multi-agent face of the deliberate information-rate throttling that \cite{ctpersuasion2024} leave open in continuous time, and it is the sense in which Part~II's frontier is a design problem and not merely an estimation one.

\section{Two Worked Examples: Evacuation and Power Coordination}\label{sec:examples}
We close with two scalar, two-agent LQG Markov information processes that make Theorem~\ref{thm:lqg} and Proposition~\ref{prop:rent} fully explicit. Each borrows its structure from a setting that can be modelled as a Markov information process (a congestion/evacuation setting and a power-coordination setting, studied as continuous time stochastic Stackelberg control problems by \cite{sezer2026hurricane} and \cite{sezer2026power} respectively), but the examples below are self-contained illustrations of the present theory and use none of those papers' results. We take symmetric agents with own-curvature $\Phi_{ii}=h_0>0$ and cross-coupling $\Phi_{ij}=h_1$ ($i\neq j$), and a diagonal transition \eqref{eq:transition} with scalar gains $A_h=A$, $B_h=B$, $C_h=C$ acting on each agent's own coordinate, so each agent's action moves only its own state. Writing $\Pi$ for the (symmetric, scalar) continuation coefficient $\Pi_{h+1,i}$ of Lemma~\ref{lem:quadratic}, Theorem~\ref{thm:lqg} specialises to the per-agent obedience condition
\begin{equation}\label{eq:ex_obd}
\begin{aligned}
\underbrace{(h_0-\delta B^2\Pi)}_{\widetilde\Phi_{ii}}\,\var(a_i)\ +\ \underbrace{h_1}_{\widetilde\Phi_{ij}}\,\cov(a_i,a_j)\ =\ \cov(a_i,\gamma_i)+\cov(a_i,s_i)
+ \delta B\Pi\big(A\,\cov(a_i,s_i)+C\,\cov(a_i,\gamma_i)\big),
\end{aligned}
\end{equation}
with $\Pi$ solving the backward recursion $\Pi_{h,i}=\delta A^2\Pi_{h+1,i}+R_{h,i}$. The own-curvature is renormalised to $\widetilde\Phi_{ii}=h_0-\delta B^2\Pi$ (the controlled transition makes a unit of own action worth $\delta B^2\Pi$ in continuation), the cross-coupling $h_1$ is untouched because the channels are decoupled, and the bracketed term is the continuation coupling of the recommendation to the states. Setting $\delta=0$ or $B=0$ erases both effects and returns the static condition $h_0\var(a_i)+h_1\cov(a_i,a_j)=\cov(a_i,\gamma_i)+\cov(a_i,s_i)$.

\begin{example}[Evacuation: a two-zone staggering example]\label{ex:evac}
Two evacuation zones $i\in\{1,2\}$ share a downstream corridor. Agent $i$'s action $a_i$ is its egress rate; $\gamma_i$ is the believed local hazard (the storm advisory the designer shapes); $s_i$ is the zone's at-risk backlog. The shared corridor makes simultaneous egress costly, so the cross-coupling is positive, $h_1>0$ (a congestion complementarity penalising co-movement of $a_1,a_2$). Faster egress drains the backlog, so the own channel has $B<0$ (action reduces next-period $s_i$), and a backlog persists, $A\in(0,1)$; the advisory feeds the backlog with $C>0$. Because $B<0$ and (for an exposure-minimising continuation) $\Pi>0$, the renormalised own-curvature $\widetilde\Phi_{ii}=h_0-\delta B^2\Pi<h_0$: a far-sighted zone is \emph{more} willing to evacuate than its stage payoff alone implies, since draining the backlog today lowers its continuation exposure. The designer's obedient recommendation must account for this: with the positive $h_1$, the second moments solving \eqref{eq:ex_obd} push $\cov(a_1,a_2)$ down (anti-synchronised release), the one-period analogue of a staggered evacuation order. In Part~II a zone that has learned the corridor dynamics (a sharper $\widehat\Pi$ than the designer's commitment) perceives a residual $r_{h,i}=2\delta B(\widehat\Pi-\Pi)\,\bar s_i$ and earns
\begin{equation}\label{eq:ex_rent}
\mathrm{Rent}_{h,i}=\frac{\delta^2 B^2(\widehat\Pi-\Pi)^2\,\bar s_i^{\,2}}{4\,(h_0-\delta B^2\widehat\Pi)},
\end{equation}
i.e.\ a zone that better understands how its egress clears the corridor can profitably jump its slot, exactly the synchronisation the staggered order is meant to prevent; the rent grows with the backlog $\bar s_i$ it is sitting on and vanishes when its model matches the designer's.
\end{example}

\begin{example}[Power coordination: a two-area reserve example]\label{ex:power}
Two interconnected areas $i\in\{1,2\}$ are tied by an intertie. Agent $i$'s action $a_i$ is its reserve provision / inter-area export; $\gamma_i$ is the believed local stress (the weather advisory the operator shapes); $s_i$ is the area's reserve-shortfall state. Mutual aid makes the actions substitutes across areas: when one area exports, it relieves the other, so the cross-coupling is negative, $h_1<0$ (an effort the partner can free-ride). Providing reserve improves the own shortfall state, $B>0$, the shortfall is persistent, $A\in(0,1)$, and stress worsens it, $C>0$. Here $\widetilde\Phi_{ii}=h_0-\delta B^2\Pi<h_0$ again, but the economics differ: a far-sighted area values reserve more because today's provision builds a continuation buffer. Under common (systemic) stress the advisory $\gamma_1,\gamma_2$ is correlated, so $\cov(a_i,\gamma_i)$ on the right of \eqref{eq:ex_obd} is large and the continuation term $\delta B\Pi C\,\cov(a_i,\gamma_i)$ \emph{amplifies} it, echoing the way disclosure and coupling can reinforce one another under common risk. In Part~II an area whose model of the tie / reserve dynamics outruns the operator's commitment earns the same rent \eqref{eq:ex_rent}; with $h_1<0$ the binding incentive is reserve \emph{withholding}: an area that has learned it can lean on its neighbour's continuation buffer perceives $r_{h,i}=2\delta B(\widehat\Pi-\Pi)\bar s_i$ and under-provides, a form of strategic withholding that, absent a transfer, the designer here can deter only by the obedience slack of Proposition~\ref{prop:robust}.
\end{example}

The two examples share one obedience condition \eqref{eq:ex_obd} and one rent formula \eqref{eq:ex_rent}; only the signs of the primitives differ, $h_1>0$ (congestion complementarity) for evacuation versus $h_1<0$ (mutual-aid substitutability) for power, and that single sign flip turns the binding Part~II deviation from premature synchronisation into reserve withholding.

\subsection{Numerical illustration: utilitarian and maximin design}\label{sec:numerics}

We instantiate the congestion example ($h_1>0$) and run Algorithm~\ref{alg:aps} in its stationary LQG form (Proposition~\ref{prop:complexity}). Primitives: $h_0=1$, $h_1=0.4$, $A=0.4$, $B=0.25$, $C=0.3$, $\delta=0.9$, transition-noise variance $\sigma_\varepsilon^2=0.05$, $d_i\equiv 0$, and \emph{asymmetric} payoff-state variances $\sigma_{\gamma_1}^2=1.5$, $\sigma_{\gamma_2}^2=0.5$, so the two designer objectives we compare, utilitarian welfare $\sum_i\E[u_i]$ and worst-case (maximin) welfare $\min_i\E[u_i]$, pull in different directions. Recommendations are drawn from the linear-Gaussian class $a_i=\alpha_i\gamma_i+\kappa_i s_i+\eta_i$ with correlated randomisation $(\eta_1,\eta_2)$, and, consistent with the compactness maintained in Proposition~\ref{prop:optexist}, we impose the action bound $\E[a_i^2]\le 4$ on every design (without it the unconstrained first best is unbounded: the designer would exploit unboundedly large anti-correlated randomisation near the stability boundary). Table~\ref{tab:numerics} reports stationary per-stage welfare; obedience residuals at all reported optima are below $10^{-12}$.

\begin{table}[t!]
\caption{Stationary per-stage welfare across designs in the congestion instance. The no-disclosure and first-best rows are the benchmarks of Proposition~\ref{prop:valuebounds}.}
\centering
{\begin{tabular}{lcccc}\toprule
Design & $\E[u_1]$ & $\E[u_2]$ & Total & Worst agent\\ \midrule
No disclosure ($V^{\mathrm{bab}}$) & $0$ & $0$ & $0$ & $0$\\
Maximin optimum & $1.652$ & $1.198$ & $2.850$ & $1.198$\\
Utilitarian optimum ($J^\star$) & $2.386$ & $1.042$ & $3.428$ & $1.042$\\
First best ($V^{\mathrm{fb}}$) & $2.657$ & $0.922$ & $3.579$ & $0.922$\\ \bottomrule
\end{tabular}}
\label{tab:numerics}
\end{table}

Three observations. First, the computed values realise the sandwich of Proposition~\ref{prop:valuebounds}, $V^{\mathrm{bab}}=0\le J^\star=3.428\le V^{\mathrm{fb}}=3.579$: in this instance the value of information design ($3.428$) is large while the price of obedience ($0.151$) is small; the designer loses little by having to respect incentives, but everything by staying silent. Second, the maximin design purchases $+0.157$ for the worst agent at a cost of $0.578$ in total welfare, and it does so by reallocating informativeness toward the low-variance agent; notably, the \emph{first best} treats the worst agent worse than either obedient design ($0.922<1.042$), so in this instance the obedience constraint itself partially protects the weaker agent. Third, Figure~\ref{fig:numerics} (left) shows the value iteration of Proposition~\ref{prop:complexity}(ii) on the instance: the error decays geometrically with empirical modulus $\hat q\approx 0.61$, reaching $10^{-9}$ in $40$ iterations. The right panel simulates a learning agent under the utilitarian policy: the agent ridge-estimates $\theta=(A,B,C)$ from its own trajectory (regularizer $\lambda=1$), and the cumulative squared estimation error $\sum_{k\le t}\|\hat\theta_k-\theta\|^2$ (the driver of the rent bound in Theorem~\ref{thm:regret} through $\mathrm{Rent}_t\le C\|G_t\|^2$) tracks the $c\log^2 t$ envelope of the theorem.

\begin{figure}[t!]
\centering
\includegraphics[width=\textwidth]{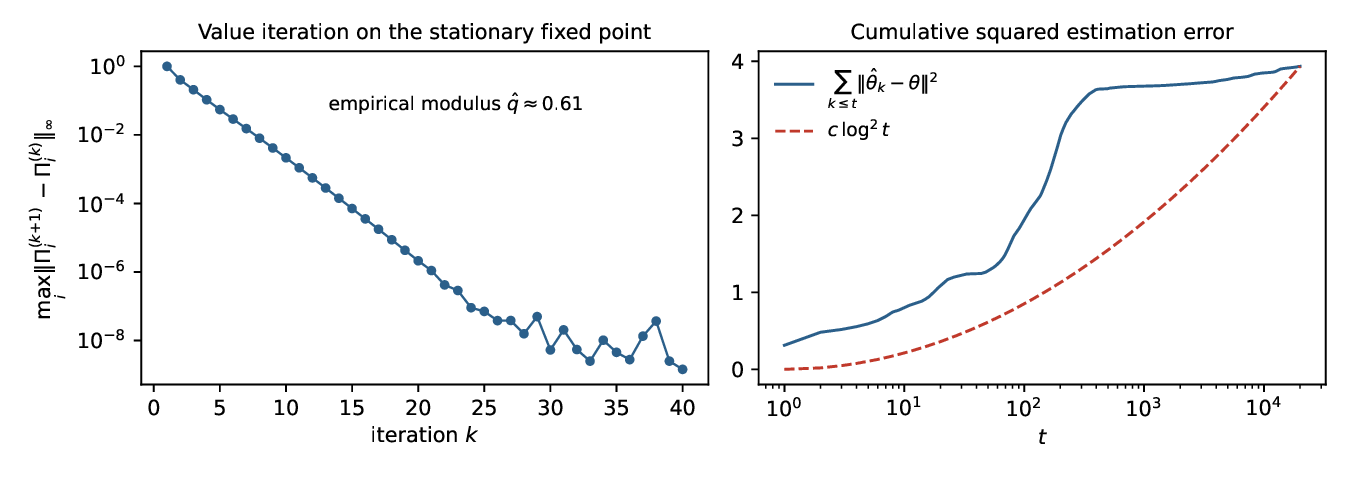}
\caption{Left: geometric convergence of value iteration on the stationary fixed point \eqref{eq:are} in the congestion instance (Proposition~\ref{prop:complexity} (ii)). Right: cumulative squared estimation error of a learning agent under the utilitarian design against the $c\log^{2}t$ envelope of Theorem~\ref{thm:regret} ($c$ fitted at the endpoint, $T=2\times 10^4$).}
\label{fig:numerics}
\end{figure}

\section{Conclusion}
We lifted the Bergemann--Morris obedience condition to a controlled-Markov environment with far-sighted agents. Part~I established the known-dynamics benchmark: a dynamic obedience condition adding a continuation-value differential to the static one, a dynamic revelation principle justifying the recommendation-policy formulation, and a recursive designer problem with the agents' promised utilities as state variables, solved by a set-valued backward-induction algorithm (Algorithm~\ref{alg:aps}) whose optimum is attained and lies between the no-disclosure and first-best values. In the LQG class obedience is a covariance condition with a modified interaction matrix, continuation values are quadratic, and the stationary case is a discounted algebraic Riccati equation, all collapsing to the static case when actions cannot move the state.

Part~II showed that this benchmark is the reference point for an agent that learns the dynamics: knowledge of the transition is a source of rents (non-negative, bounded by the agent's misspecification, and zero exactly at the benchmark) that in the LQG class are quadratic in the model error and deterred only by satisfying obedience with slack. Under stationary primitives and persistent excitation the cumulative rent obeys a logarithmic bound (Theorem~\ref{thm:regret}); the unconditional bound, under endogenous excitation, self-perturbed data, and mutually entangled learning, is the open problem we leave as Conjecture~\ref{conj:regret}. 

\bibliographystyle{unsrtnat}
\bibliography{ref}

\end{document}